\DeclareMathAlphabet{\mathpzc}{OT1}{pzc}{m}{it}
\pgfplotsset{compat=1.14}
\tikzstyle{none}=[]
\tikzstyle{new style 0}=[draw,circle,fill=white]
\tikzstyle{new edge style 1}=[draw,dashed]
\tikzstyle{new edge style 1}=[draw,dashed]
\tikzstyle{new edge style 0}=[->]
\newcommand{\updownextend}[1]{
	\addtolength{\textheight}{#1}}
\DeclareFontFamily{OT1}{pzc}{}
\DeclareFontShape{OT1}{pzc}{m}{it}{<-> s * [1.10] pzcmi7t}{}
\DeclareMathAlphabet{\mathpzc}{OT1}{pzc}{m}{it}
\DeclareSymbolFont{SY}{U}{psy}{m}{n}
\DeclareMathSymbol{\emptyset}{\mathord}{SY}{'306}
\theoremstyle{plain}
\newtheorem{thm}{Theorem}[section]
\newtheorem*{thm*}{Theorem}
\newtheorem{cor}[thm]{Corollary}
\newtheorem{lem}[thm]{Lemma}
\newtheorem{prop}[thm]{Proposition}
\newtheorem{defn}[thm]{Definition}
\newtheorem{ex}[thm]{Example}
\newtheoremstyle{named}{}{}{\itshape}{}{\bfseries}{.}{.5em}{#1 \thmnote{#3}}
\theoremstyle{named}
\numberwithin{equation}{section}
\def\C{{\mathbb C}}
\def\l{\lambda}
\def\ov{\overline}
\def\m{\mathcal}
\def\mf{\mathfrak}
\def\mb{\mathbb}
\def\G{\Gamma}
\def\d{\sum}
\def\beq{\begin{eqnarray}}
	\def\eeq{\end{eqnarray}}
\def\beqa{\begin{eqnarray*}}
	\def\eeqa{\end{eqnarray*}}
\def\ov{\overline}
\newcommand{\be}{\begin{equation}}
	\newcommand{\ee}{\end{equation}}
\newcommand{\bea}{\begin{eqnarray}}
	\newcommand{\eea}{\end{eqnarray}}
\newcommand{\Bea}{\begin{eqnarray*}}
	\newcommand{\Eea}{\end{eqnarray*}}
\newcounter{cnt1}
\newcounter{cnt2}
\newcounter{cnt3}
\newcommand{\blr}{\begin{list}{$($\roman{cnt1}$)$}
		{\usecounter{cnt1} \setlength{\topsep}{0pt}
			\setlength{\itemsep}{0pt}}}
	\newcommand{\bla}{\begin{list}{$($\alph{cnt2}$)$}
			{\usecounter{cnt2} \setlength{\topsep}{0pt}
				\setlength{\itemsep}{0pt}}}
		\newcommand{\bln}{\begin{list}{$($\arabic{cnt3}$)$}
				{\usecounter{cnt3} \setlength{\topsep}{0pt}
					\setlength{\itemsep}{0pt}}}
			\newcommand{\el}{\end{list}}
		\DeclareMathOperator  {\tr} {tr}
		\newcommand{\com}[1]{\mathcal{C}_{#1}}
\begin{document}
			
			\title{On the spectra of Commuting Graphs}
			\author[Ghosh, Parui]{Gargi Ghosh and Samiron Parui}
			\address[Ghosh]{Silesian University in Opava, 746 01, Czech Republic}
			\address[Parui]{Indian Institute of Science Education and Research Kolkata, Mohanpur, 741246, India}
			\email[Ghosh]{gargighosh1811@gmail.com}
			\email[Parui]{samironparui@gmail.com}
			\subjclass[2010]{05C25, 05C50, 05E16}
			\keywords{Commuting graphs, Finite groups, center of a group, Laplacian, Adjacency, signless Laplacian, Algebraic connectivity, Isoperimetric number}
			\maketitle
			\begin{abstract}
				We describe the complete spectra of Laplacian, signless Laplacian, and adjacency matrices associated with the commuting graphs of a finite group using group theoretic information. 
				We provide a method to find the center of a group by only using the Laplacian of the commuting graph of the group.
				The graph invariants (such as diameter, clique number, and mean distance) of the commuting graph associated with a finite group are determined.  We produce a number of examples  to illustrate the applications of our results.
			\end{abstract}
			\section{Introduction}The interplay between groups and associated graphs (such as  power graph, commuting graph, and enhanced power graph) has been studied extensively. 
			Naturally, the set of all edges of any such graph encodes various properties of the group from the binary interrelation of the group elements \cite{cameron2021graphs}.  
			The commuting graph associated to a group $G$ is the graph on the vertex set $G$, encoding the information about the commutative pairs of elements of $G$. More explicitly, a \emph{commuting graph associated to the group $G$}, denoted by $\com{G}$, is an undirected graph with vertex set $G$ and
			$\{g_1,g_2\}\subseteq G$ forms an edge in $\com{G}$ if and only if $g_1g_2=g_2g_1$.  The notion of commuting graphs is introduced in \cite[p. 565]{MR74414}. In this article, we define the commuting graph associated with a finite group following \cite{cameron2020between,cameron2021graphs}. 
			
			Our primary aim is to decode various group-theoretic information from the spectra of the adjacency, the Laplacian, and the signless Laplacian of a commuting graph. We use the term \emph{``spectra of a commuting graph"} to denote these three spectra together. The Laplacian spectrum of a graph contains inherent information about various graph invariants such as connectivity, diameter, isoperimetric number, maximum cut, independence number, genus,  mean distance, and bandwidth-type parameters, see \cite{MR2571608,MR1170831,MR1105467,MR318007,MR1275613}. Moreover, the investigation of the Laplacian spectrum enriches the study of natural phenomena involving diffusion, such as synchronization, random walk, consensus \cite{MR4079051,MR1910670,MR1877614,MR3801977,MR4033495}. The spectra of a commuting graph are well-studied in the literature, see \cite{cameron2021graphs,MR4461661,MR4443447,MR4338337,MR4296337} and the references therein. To the best of our knowledge, there is not much information available on the complete spectra of the commuting graph of a finite group, albeit on particular families of groups \cite{MR4553232,MR4338337,MR4041692,MR3769593}. However, more specific spectral information can be derived for various families of groups, cf. \Cref{all-pendant}, \Cref{non_nbd} and \Cref{non_nbd_q}. 
			
			We end this section with a highlight of the key results and ideas.
			\begin{itemize}[leftmargin=*]
				\item 
				Let $G$ be a finite group. A complete description of the spectra of the Laplacian, the signless Laplacian and the adjacency of $\com{G}$ are provided in \Cref{complete-l}, \Cref{complete-q} and \Cref{complete-a}, respectively. 
				Some eigenvalues of the spectra of $\m C_G$ can be deduced directly whereas for the rest, we define an equivalence relation on $G$ in \Cref{equidef} and produce a quotient graph $ \Gamma_{G/\mathfrak{G}}$ (which is `smaller' than $\com{G}$).  Consequently, it enables to conclude that the eigenvalues of the associated matrices of the quotient graph are the eigenvalues for the corresponding matrices of $\m C_G.$  
				\item  On the other hand, \Cref{center_LG} yields a spectral method to compute the center of a finite group $G$  which helps us to figure out the center of $G$ from one Laplacian eigenvalue of $\com{G}$ and its eigenspace.
				
				\item To emphasize our results, we provide spectra of the commuting graphs associated with a group $G\in \mathcal G,$ where the family
				$\mathcal G$ of finite groups includes the finite reflection groups (for example, the symmetric groups, the dihedral groups), the centralizer abelian groups and the quaternion group.
				\item In \Cref{app}, we determine some relations between some graph invariants such as diameter, clique number, mean distance of $\m C_G$ and the group properties such as order of the group, centre of the group etc.
			\end{itemize}
			

			\section{Preliminaries}  
			\subsection{Adjacency, Laplacian, and signless Laplacian of a graph.}
			Let $\Gamma$ be a graph with $V(\G)$ as the vertex set, and $E(\G)$ is the edge set of $\G$. For all $v\in V(\G)$, the \emph{neighbourhood of} $v$ is $N_{\G}(v)=\{u\in V(\G):\{u,v\}\in E(\G)\}$. The set of all functions from $V(\G)$ to $\mathbb{R}$ is denoted by $\mathbb{R}^{V(\G)}$. The \textit{adjacency } $A_{\Gamma}:\mathbb{R}^{V(\G)}\to \mathbb{R}^{V(\G)}$, the \textit{signless Laplacian} $Q_{\Gamma}:\mathbb{R}^{V(\G)}\to \mathbb{R}^{V(\G)}$, \textit{the Laplacian} $L_{\Gamma}:\mathbb{R}^{V(\G)}\to \mathbb{R}^{V(\G)}$ associated with $\Gamma$ are defined by 
			$$(A_\Gamma x)(v) = \sum\limits_{u(\ne v):\{u,v\}\in E(\G)}x(u), $$ $$(Q_{\Gamma}x)(v)=\sum\limits_{u(\ne v):\{u,v\}\in E(\G)}(x(v)+x(u)),$$ and $$(L_{\Gamma}x)(v)=\sum\limits_{u(\ne v):\{u,v\}\in E(\G)}(x(v)-x(u)),$$ respectively, for $x\in \mathbb{R}^{V(\G)}$, and $v\in V(\G)$. For any $x\in \mathbb{R}^{V(\G)}$, the support of $x$ is $supp(x)=\{v\in V(\G):x(v)\ne 0\}$. For each $U\subseteq V(\G)$, we define $\chi_{U}\in \mathbb{R}^{V(\G)}$ as 
			$$\chi_{U}(v)=\begin{cases}
				1&\text{~if~}v\in U,\\
				0&\text{~otherwise,}
			\end{cases}$$
			and we define $T_U=\{x\in\mathbb{R}^{V(\G)}:supp(x)\subseteq U, \sum\limits_{u\in U}x(u)=0\}$.
			\begin{lem}\label{lemtu}
				Let $\G$ be a graph.
				For any $U=\{u_0,\ldots,u_k\}\subseteq V(\G)$, $T_U$ is a subspace of $\mathbb{R}^{V(\G)}$ of dimension $|U|-1$  and $\mathcal{Y}_U=\{y_i=\chi_{\{u_0\}}-\chi_{\{u_i\}}:i=1,\ldots,k\}$ is a basis of $T_U$.
			\end{lem}
			\begin{proof}
				Let $x,y\in T_U$. If $v\in V(\G)\setminus U$ then $(c_1x+c_2y)(v)=0$ for all $c_1,c_2\in\mathbb{R}$, and $ \sum\limits_{u\in U}(c_1x+c_2y)(u)=0$. Thus, $c_1x+c_2y\in T_U$ for all $c_1,c_2\in\mathbb{R}$. Therefore, $T_U$ is a subspace of $\mathbb{R}^{V(\G)}$.
				
				For $c_1,\ldots,c_k\in\mathbb{R}$, if $\sum\limits_{i=1}^kc_iy_i=0$ then $c_i=0$ for all $i=1,\ldots,k$. Thus, the set $\mathcal{Y}_U$ is linearly independent. Since $x=\sum\limits_{i=1}^kx(i)y_i$ for all $x\in T_U$, the linearly independent set $\mathcal{Y}_U$ span $T_U$. Therefore, $\mathcal{Y}_U$ is a basis of $T_U$ and the dimension of $T_U$ is $|U|-1$.  
			\end{proof}

			For any linear operator $M:V\to V$ on a finite-dimensional vector space $V$, we denote the set of all eigenvalues of $M$ as $\sigma(M)$. For any $\lambda\in \sigma(M)$, we denote the eigenspace of $\lambda$ as $ \Omega_{\lambda}(M)$.
			\subsection{Twin-neighbours and non-adjacent twin}
			\begin{defn} For a graph $\Gamma$, we refer to $u,v\in V(\G)$ as \emph{twin-neighbours} in $\Gamma$  if $\{u,v\}\in E(\G)$, and  $N_{\G}(v)\setminus \{u\}=N_{\G}(u)\setminus \{v\}$. We call $u,v\in V(\G)$ as \emph{non-adjacent twin} in $\Gamma$  if $\{u,v\}\notin E(\G)$, and  $N_{\G}(v)=N_{\G}(u)$.
			\end{defn}
			For example, in \Cref{fig:twin}, $u$, $v$  are twin-neighbours, and $u'$, $v'$  are non-adjacent twin.
			\begin{figure}[ht]
				\centering
				\begin{tikzpicture}[scale=0.6]
					
					\node [style=new style 0] (0) at (-12, 0) {$u$};
					\node [style=new style 0] (1) at (-8, 0) {$v$};
					\node [style=new style 0] (2) at (-10, 3) {};
					\node [style=new style 0] (3) at (-10, 4) {};
					\node [style=new style 0] (4) at (-10, 6) {};
					\node [style=none] (5) at (-10, 5.25) {$\vdots$};
					\node [style=none] (6) at (-10, 4.75) {$\vdots$};
					\node [style=new style 0] (7) at (0, 0) {$u'$};
					\node [style=new style 0] (8) at (4, 0) {$v'$};
					\node [style=new style 0] (9) at (2, 3) {};
					\node [style=new style 0] (10) at (2, 4) {};
					\node [style=new style 0] (11) at (2, 6) {};
					\node [style=none] (12) at (2, 5.25) {$\vdots$};
					\node [style=none] (13) at (2, 4.75) {$\vdots$};
					\node [style=none] (14) at (-10, -1.75) {Twin-neighbours};
					\node [style=none] (15) at (2, -1.75) {Non-adjacent twin};
					
					\draw (0) to (1);
					\draw (2) to (0);
					\draw (2) to (1);
					\draw (3) to (0);
					\draw (3) to (1);
					\draw (4) to (0);
					\draw (4) to (1);
					\draw (9) to (7);
					\draw (9) to (8);
					\draw (10) to (7);
					\draw (10) to (8);
					\draw (11) to (7);
					\draw (11) to (8);
				\end{tikzpicture}
				\caption{Twin-neighbours, and non-adjacent twin.}
				\label{fig:twin}
			\end{figure}

			
			\begin{lem}[]{\rm (\cite[Theorem 5.10.]{MR1352837})}\label{nbd-eig}
				Let $\Gamma$ be a graph. If $u,v\in V(\G)$ are twin-neighbours, then $-1$, $|N(v)|-1$, and $|N(v)|+1$ are eigenvalues of $A_\Gamma$, $Q_\Gamma$, and $L_\Gamma$, respectively, and the corresponding eigenvector for each operator is $\chi_{\{u\}}-\chi_{\{v\}}$.
			\end{lem}
			\begin{proof}
				\sloppy Suppose that $y=\chi_{\{u\}}-\chi_{\{v\}}$. Since $\sum\limits_{w(\ne u):\{u,w\}\in E}y(w)=-1$, $\sum\limits_{w(\ne v):\{v,w\}\in E}y(w)=1$, and for all $u^\prime\notin\{u,v\}$, the sum $\sum\limits_{w(\ne u^\prime):\{u^\prime,w\}\in E}y(w)=0$, and we have $Ay=-y$.
				\sloppy For signless Laplacian, we have $\sum\limits_{w(\ne u):\{u,w\}\in E(\G)}(y(u)+y(w))=|N(u)|-1$,  $\sum\limits_{w(\ne v):\{v,w\}\in E(\G)}(y(u)+y(w))=-|N(v)|+1$,  and for all $u^\prime\notin\{u,v\}$, $\sum\limits_{w(\ne u^\prime):\{u^\prime,w\}\in E(\G)}(y(u^\prime)+y(w))=0$. Therefore, $ Q_{\Gamma}y=(|N(u)|-1)y$.
				
				Similarly, $\sum\limits_{w(\ne u):\{u,w\}\in E(\G)}(y(u)-y(w))=|N(u)|+1$,  $\sum\limits_{w(\ne v):\{v,w\}\in E(\G)}(y(u)-y(w))=-(|N(v)|+1)$, and for all $u^\prime\notin\{u,v\}$, $\sum\limits_{w(\ne u^\prime):\{u^\prime,w\}\in E(\G)}(y(u^\prime)-y(w))=0$. Thus, $ L_{\Gamma}y=(|N(u)|+1)y$. 
			\end{proof}
			\section{The spectra of commuting graphs}\label{spectra}
			Let $G$ be a group. The center of $G$ is denoted by
			$ Z(G)=\{g\in G:gu=ug \text{~for all~}u\in G\}$
			and for $u\in G$, the \emph{centralizer of $u$} is denoted by
			$$C(u)=\{g\in G:gu=ug\}.$$
			
			If $G$ is a commutative group then $\mathcal{C}_G$ is a complete graph and thus, using Lemma \ref{nbd-eig}, the eigenvalues of $A_{\mathcal{C}_G}$ are $ |G|-1$ with multiplicity $1$ and $-1$ with multiplicity $ |G|-1$. The eigenvalues of $Q_{\mathcal{C}_G}$ are $ 2(|G|-1)$ with multiplicity $1$ and $ |G|-2$ with multiplicity $|G|-1$.
			The eigenvalues of $L_{\mathcal{C}_G}$ are $ 0$ with multiplicity $1$ and $ |G|$ with multiplicity $|G|-1$. Now we consider the case when $G$ is non-commutative. 
			
			For any group $G$, if $u,v(\ne u)\in G$ are such that $C(u)=C(v)$, then $u,v$ are twin-neighbours in $\com{G}$. This fact makes the following equivalence relation interesting.
			\begin{defn}\label{equidef}
				Let $G$ be a group. We define a relation,
				$$\mathcal{R}_G=\{\{u,v\}\in G\times G:C(u)=C(v)\}$$
				on $G$.
			\end{defn}
			Evidently, $\mathcal{R}_G$ is an equivalence relation on $G$. For any $u\in G$, we denote the $\mathcal{R}_G$-equivalence class of $u$ as $\mathfrak{F}_u$, and the collection of all the $\mathcal{R}_G$-equivalence classes in $G$ as $\mathfrak{G} $. Suppose that $\mathfrak{G} =\{\mf{F}_0,\ldots \mf{F}_m\} $. One of these equivalence classes is the centre of $G$. We assume $\mf{F}_0=Z(G)$. For any $u\in G$, if $u\in \mf{F}_i$, then $\mf{F}_i=\mf{F}_u$. For $u,v\in G$ with $ \mf{F}_u\ne\mf{F}_v$, if $uv=vu$ then $u^\prime v^\prime=v^\prime u^\prime$ for all $u^\prime\in\mf{F}_u, v^\prime\in\mf{F}_v$, in that case we say two equivalence class $\mf{F}_u$, and $\mf{F}_v$ are \emph{adjacent}. Therefore, for all $v\in C(u)$, either $v\in \mf{F}_u$ or $\mf{F}_u$ and $\mf{F}_v$ are adjacent. For any group $G$, the next result provides some relations of $C(u)$, and $\mf{F}_u$ with spectra of $\com{G}$ for all $u\in G$. 
			
			\begin{prop}\label{cu}
				Let $G$ be a group, and $\Gamma=\com{G}$. If $u\in G $, then  $-1$, $|C(u)|-2$, and $|C(u)|$ are eigenvalues of $A_\Gamma$, $Q_\Gamma$, and $L_\Gamma$, respectively with multiplicity $|\mathfrak{F}_u|-1$ in each cases.
			\end{prop} 
			\begin{proof} 
				Since for all $u\in G$, we have  $N(u)=C(u)\setminus \{u\}$, the result follows from \Cref{nbd-eig}.
			\end{proof}
			
			Using \Cref{cu}, we can find $|G|- \d\limits_{i=0}^m(|\mf{F}_i|-1)$ eigenvalues of each of $A_\Gamma$, $Q_\Gamma$, and $L_\Gamma$. Now, for the remaining $|\mathfrak{G}| $ eigenvalues, we use the graph $ \Gamma_{G/\mathfrak{G}}$ with vertex set $V(\Gamma_{G/\mathfrak{G}})= \mathfrak{G}$, and $$E(\Gamma_{G/\mathfrak{G}})=\{\{\mf{F}_u,\mf{F}_v\}: \mf{F}_u,\text{~and~}\mf{F}_v \text{~are adjacent class}\}.$$
			We enumerate $\mf{G}$ as $\mf{G}=\{\mf{F}_0(=Z(G)),\mf{F}_1,\ldots,\mf{F}_m\}$, and if $\mf{F}_i$, and $\mf{F}_j$ are adjacent in $\Gamma_{G/\mathfrak{G}}$, we write $i\sim j$. For any function $x:\mf{G}\to \mathbb{R}$, we define the \emph{blow up} of $x$ is the function $\ov{x}:G\to \mathbb{R}$ such that $\ov{x}(v)=x(\mf{F}_v)$ for all $v\in G$.
			
			\begin{ex}\label{s4initial}\rm
				We enumerate the symmetric group of four elements  $\mf{S}_4$ as $1= e$, the identity element, $2= (3,4) $, $3=  (2,3) $, $ 4= (2,3,4)$, $5=(2, 4, 3)$, $ 6=(2, 4)$, $7=(1, 2)$, $8=(1, 2)(3, 4))$, $ 9=(1, 2, 3)$, $10=(1, 2, 3, 4)$, $11=(1, 2, 4, 3)$, $12=(1, 2, 4)$, $13=(1, 3, 2)$, $ 14=(1, 3, 4, 2)$, $ 15=(1, 3)$, $ 16=(1, 3, 4)(1, 3)$, $ 17=(2, 4)$, $ 18=(1, 3, 2, 4)$, $ 19=(1, 4, 3, 2)$, $20=(1, 4, 2)$, $21=(1, 4, 3)$, $22=(1, 4)$, $ 23=(1, 4, 2, 3)$, $ 24=(1, 4)(2, 3)$.

				\begin{figure}[ht]
					\centering
					\includegraphics[scale=0.25]{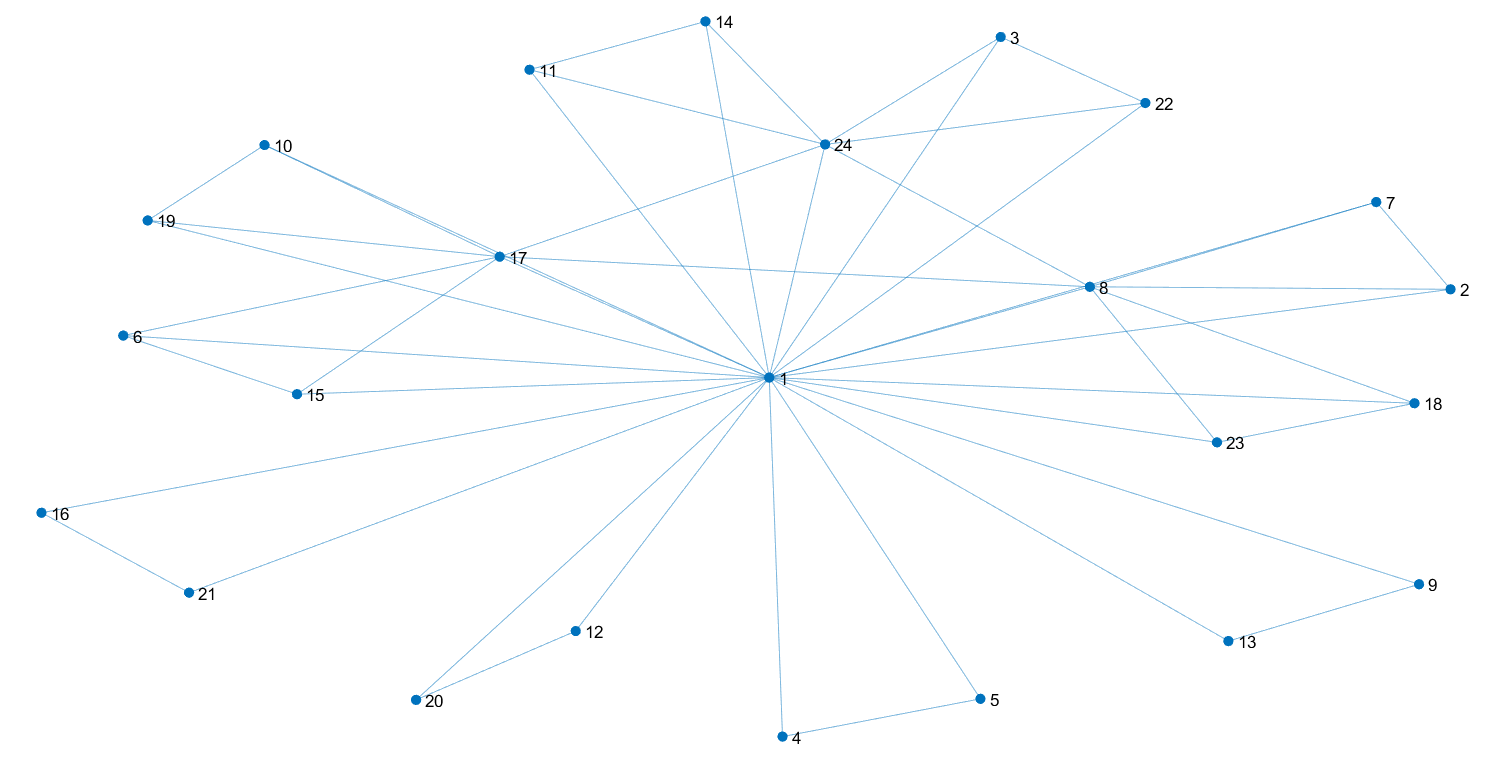}
					\caption{$\com{\mf{S}_4}$}
					\label{fig:S-4}
				\end{figure}
				For $\com{\mf{S}_4}$, see \Cref{fig:S-4}. For this group, 
				$$\mf{G}=\{\mf{F}_1, \mf{F}_{17}, \mf{F}_{24}, \mf{F}_{8}, \mf{F}_{16},\mf{F}_{10}, \mf{F}_{11}, \mf{F}_{3}, \mf{F}_{2}, \mf{F}_{18}, \mf{F}_{9}, \mf{F}_{4}, \mf{F}_{12}, \mf{F}_{16}\},$$
				
				where $\mf{F}_1=Z(\mf{S}_4)=\{1\}$, $\mf{F}_{17}=\{17\}$, $\mf{F}_{24}=\{24\}$,$ \mf{F}_{8}=\{8\}$, $\mf{F}_{16}=\{16,15\}$, $ \mf{F}_{10}=\{10,19\}$, $\mf{F}_{11}=\{11,14\}$, $\mf{F}_{3}=\{3,22\}$, $\mf{F}_{2}=\{2,7\}$, $\mf{F}_{18}=\{18,23\} $, $\mf{F}_{9}=\{9,13\}$, $\mf{F}_{4}=\{4,5\}$, $\mf{F}_{12}=\{12,20\}$, $\mf{F}_{16}=\{16,21\}$. The graph $\Gamma_{\mf{S}_4/\mathfrak{G}}$ is given in \Cref{fig:contraction}.
			\end{ex}
			\begin{figure}[ht]
				\centering
				\begin{tikzpicture}[scale=0.6]
					
					\node [style=new style 0] (1) at (0, 2) {\tiny $\mf F_{24}$};
					\node [style=new style 0] (2) at (2, 4) {\tiny $\mf F_{3}$};
					\node [style=new style 0] (3) at (-2, 4) {\tiny $\mf F_{11}$};
					\node [style=new style 0] (5) at (-2.25, 0.75) {\tiny $\mf F_{17}$};
					\node [style=new style 0] (6) at (-3.75, 3.25) {\tiny $\mf F_{10}$};
					\node [style=new style 0] (7) at (-4.5, -0.5) {\tiny $\mf F_{6}$};
					\node [style=new style 0] (8) at (0, 0) {\tiny $\mf F_1$};
					\node [style=new style 0] (9) at (2.25, 0.75) {\tiny $\mf F_8$};
					\node [style=new style 0] (10) at (4.5, -0.5) {\tiny $\mf F_{18}$};
					\node [style=new style 0] (11) at (3.5, 3.25) {\tiny $\mf F_{7}$};
					\node [style=new style 0] (12) at (-3.25, -3) {\tiny $\mf F_{16}$};
					\node [style=new style 0] (13) at (-1.25, -3.75) {\tiny $\mf F_{12}$};
					\node [style=new style 0] (14) at (1.25, -3.75) {\tiny $\mf F_{4}$};
					\node [style=new style 0] (15) at (3.25, -3) {\tiny $\mf F_{9}$};
					
					\draw (1) to (2);
					\draw (1) to (3);
					\draw (5) to (6);
					\draw (5) to (7);
					\draw (8) to (9);
					\draw (9) to (10);
					\draw (9) to (11);
					\draw (11) to (8);
					\draw (8) to (10);
					\draw (8) to (1);
					\draw (3) to (8);
					\draw (8) to (2);
					\draw (8) to (5);
					\draw (7) to (8);
					\draw (8) to (6);
					\draw (8) to (13);
					\draw (8) to (14);
					\draw (8) to (15);
					\draw (8) to (12);
					\draw (1) to (5);
					\draw (5) to (9);
					\draw (9) to (1);
				\end{tikzpicture}
				
				\caption{$\Gamma_{\mf{S}_4/\mathfrak{G}}$}
				\label{fig:contraction}
			\end{figure}

			
			
			\subsection{Laplacian Spectrum of a Commuting Graph.}Now, we explore the Laplacian spectra of the commuting graph associated with a group to find the group information encoded in the spectrum. Before going into the next result, we need to introduce a matrix $[L_{\com{G}}/\mf{G}]=(l_{ij})_{\mf{F}_i,\mf{F}_j\in\mf{G }}$ associated with $\Gamma_{G/\mathfrak{G}}$ defined as 
			$$l_{ij}=\begin{cases}
				{-} |\mf{F}_j|&\text{~if~}i\ne j \text{~and~} \mf{F}_i \text{ is adjacent to }\mf{F}_j,\\
				\phantom{-}0&\text{~if~}i\ne j \text{~and~} \mf{F}_i \text{ is not adjacent to }\mf{F}_j,\\
				-\sum\limits_{j(\ne i)=0}^ml_{ij} &\text{~if~}i=j.
			\end{cases} $$
			
			\begin{thm}\label{l-contraction}

				Let $G$ be a group. 
				If $\lambda$ is an eigenvalue of the matrix $[L_{\com{G}}/\mf{G}]$ with an eigenvector $x:\mf{G}\to\mathbb{R}$, then
				$\lambda$ is also an eigenvalue of $L_{\com{G}}$ with an eigenvector $\ov{x}:{G}\to\mathbb{R}$ defined by $\ov{x}(v)=x(\mf{F}_v)$ for all $v\in V(\G)$.
			\end{thm}
			\begin{proof}
				For any $v\in V(\G)$, suppose that $\mf{F}_v=\mf{F}_i$. Thus,
				\begin{align*}
					(L_{\G}\ov{x}(v))
					&=\d\limits_{u\in C(v)}(\ov{x}(v)-\ov{x}(u))
					=\d\limits_{u\in C(v)}({x}(\mf{F}_v)-{x}(\mf{F}_u))\\
					&=\left(-\d\limits_{j(\ne i)=0}^ml_{ij}\right)x(\mf{F}_i)-\d\limits_{j(\ne i)=0}^m|\mf{F}_j|x(\mf{F}_j)\\
					&=([L_\G/\mf{G}]x)(\mf{F}_i)=\lambda x(\mf{F}_i)=\lambda \ov{x}(v). 
				\end{align*}
				This completes the proof.   
			\end{proof}
			\begin{thm}\label{z(g)-contract}
				For any group $G$, one eigenvalue of $ L_{\com{G}}$ is $|G|$ with eigenvector $x:G\to \mathbb{R}$, defined by $x_{\small Z(G)}=(|G|-|Z(G)|)\chi_{Z(G)}-|Z(G)|\d\limits_{\mf{F}_j(\ne Z(G))\in\mf{G}}\chi_{\mf{F}_j}$.
			\end{thm}
			\begin{proof}
				We claim $|G|$ is an eigenvalue of $[L_\G/\mf{G}]$ and the corresponding eigenvector is 
				$y:\mf{G}\to \mathbb{R} $, defined by
				$$y(\mf{F}_u)=
				\begin{cases}
					\phantom{-}  |G|-|Z(G)| &\text{~if~} \mf{F}_u=Z(G),\\
					- |Z(G)| &\text{~otherwise.}
				\end{cases}$$
				Let $\mf{G}=\{\mf{F}_0,\ldots,\mf{F}_m\}$.
				The following observations about $ ([L_\G/\mf{G}]y)$ establish the above claim.
				\begin{enumerate}
					\item If $\mf{F}_i=Z(G)$, then 
					\begin{align*}
						([L_\G/\mf{G}]y)(\mf{F}_i)&=(|G|-|Z(G)|)(|G|-|Z(G)|)+|Z(G)|(|G|-|Z(G)|)\\
						&=|G|(|G|-|Z(G)|).
					\end{align*}
					\item If $\mf{F}_i\ne Z(G)$, then 
					\begin{align*}
						([L_\G/\mf{G}]y)(\mf{F}_i)&=-|Z(G)|(|G|-|Z(G)|)-|Z(G)||Z(G)|\\
						&=-|G||Z(G)|.
					\end{align*}
					Since $x_{\small Z(G)}=\ov{y}$, this completes the proof.
				\end{enumerate}
			\end{proof}
			For any graph $\G$, the \emph{complement of $\G$}, is a graph $\ov{\G}$  with $V(\ov{\G})=V(\G)$, and $E(\G)=\{\{u,v\}\subseteq V(\ov{\G}):\{u,v\}\notin E(G)\}$. Recall that if $\mf{F}_i$, and $\mf{F}_j$ are adjacent in $\Gamma_{G/\mathfrak{G}}$ then we write $i\sim j$. Thus, if $\mf{F}_i$, and $\mf{F}_j$ are adjacent in $\ov{\Gamma_{G/\mathfrak{G}}}$ then we write it as $i\not\sim j$. For a commutative group $G$, the multiplicity of the eigenvalue $|G|$ of $L_{\com{G}}$ is exactly $|G|-1$.
			The next result provides more information about the eigenvalue $|G|$ and its eigenspace for a non-commutative group $G$.
			\begin{thm}
				Let $G$ be a non-commutative group.
				The multiplicity of the eigenvalue $|G|$ of $L_{\com{G}}$ is $|Z(G)|+r-2$, where $r$ is the number of components in the complement of the graph $\G_{G/\mf{G}}$.
			\end{thm}
			\begin{proof}
				To prove this result, it is enough to prove the multiplicity of the eigenvalue $|G|$ of $ [L_\G/\mf{G}]$ is $r-1$.
				Let $x:\mf{G}\to\mathbb{R}$ is an eigenvector associated with eigenvalue $|G|$ of $ [L_\G/\mf{G}]$. In that case, by \Cref{l-contraction}, $\ov{x}:G\to\mathbb{R}$ is a eigenvector associated with eigenvalue $|G|$ of $L_{\com{G}}$. Since $0$ is an eigenvalue of $L_{\com{G}}$ with eigenvector $\mathbf{1}:G\to\mathbb{R}$ defined by $\mathbf{1}(v)=1$ for all $v\in G$, we have $\ov{x}$ is orthogonal to $\mathbf{1}$. Thus, if $\mf{G}=\{\mf{F}_0=Z(G),\mf{F}_1,\ldots,\mf{F}_m\}$,
				\be\label{zg-1}
				\d\limits_{i=0}^m|\mf{F}_i|x(\mf{F}_i)=0.
				\ee
				Now since $ [L_\G/\mf{G}]x=|G|x$, for all $i=0,\ldots,m$,
				\begin{align}
					\label{zg-2}&\phantom{\implies}x(\mf{F}_i)\d\limits_{j(\ne i):i\sim j}|\mf{F}_j|-  \d\limits_{j(\ne i):i\sim j}|\mf{F}_j|x(\mf{F}_j)=|G|x(\mf{F}_i)\notag\\
					&\implies -  \d\limits_{j(\ne i):i\sim j}|\mf{F}_j|x(\mf{F}_j)=\left(|G|-\d\limits_{j(\ne i):i\sim j}|\mf{F}_j|\right)x(\mf{F}_i).
				\end{align}
				Thus, using \Cref{zg-1}, and the fact $|G|=\d\limits_{i=0}^m|\mf{F}_i|$,
				\begin{align}
					&\phantom{\implies} x(\mf{F}_i)|\mf{F}_i|+\d\limits_{j(\ne i):i\not\sim j}|\mf{F}_j|x(\mf{F}_j)=|\mf{F}_i|x(\mf{F}_i)+x(\mf{F}_i)\d\limits_{j(\ne i):i\sim j}|\mf{F}_i|\notag\\
					&\implies \d\limits_{j(\ne i):i\not\sim j}(x(\mf{F}_j)-x(\mf{F}_i))|\mf{F}_j|=0\notag\\
					& \implies M_Hx=0,
				\end{align}
				where $H=\ov{\G_{G/\mf{G}}}$, and $M_H=\left(m_{ij}\right)_{\mf{F}_i,\mf{F}_j\in\mf{G}}$ is defined by
				$$ m_{ij}=
				\begin{cases}
					-|\mf{F}_j|&\text{~if~}i\not\sim j \text{~and~}i\ne j,\\
					\phantom{-} 0&\text{~if~}i\sim j \text{~and~}i\ne j,\\ 
					-\d\limits_{j(\ne i)=1}^mm_{ij} &\text{~otherwise.}
				\end{cases}$$
				Therefore, $x$ is an eigenvector associated with eigenvalue $|G|$ of $ [L_\G/\mf{G}]$ if $x$ belongs to the kernel of $M_H$, and $x$ satisfy \Cref{zg-1}.
				
				Let $D$ be a diagonal matrix of order $|\mf{G}|$ such that $D_{ii}=|\mf{F}_i|$ for all $i=0,\ldots,m$. Since, $M_H$, and $DM_H$ have the same kernel, and $DM_H$ is a weighted Laplacian matrix of the graph $H$, with weight of an edge $\{\mf{F}_i,\mf{F}_j\}(\in E(K) )$ is $|\mf{F}_i||\mf{F}_j|$. Thus, the dimension of the kernel of $M_H$ is $r$, where $r$ is the number of connected components in $H$.
				
				Let $\mathbf{1}_{\mf{G}}:\mf{G}\to \mathbb{R}$ be defined by $\mathbf{1}_{\mf{G}}(\mf{F}_i) =1$, for all $\mf{F}_i\in \mf{G}$. Since $\mathbf{1}_{\mf{G}}$ is a vector belongs to the kernel of $M_H$, does not satisfy \Cref{zg-1}, and all the elements of the kernel, that is linearly independent to $\mathbf{1}_{\mf{G}}$, satisfy \Cref{zg-1}, the multiplicity of eigenvalue $|G|$ of $ [L_\G/\mf{G}]$ is $r-1$.
			\end{proof}
			Since $\mf{F}_0=Z(G)$ is an isolated vertex in the complement of the graph $\G_{G/\mf{G}}$, the value of $r$ is at least $2$, and thus, the multiplicity of the eigenvalue $|G|$ of $L_{\com{G}}$ is at least $|Z(G)|$. Since $C(u)=G$ for all $u\in Z(G)$, \Cref{cu}, and \Cref{l-contraction} lead us to the following result.
			\begin{thm}[Complete spectrum of $L_{\com{G}}$]\label{complete-l}
				For a finite group $G$, the eigenvalues of $L_{\com{G}}$ are listed below.
				\begin{enumerate}
					\item If $u\in G\setminus Z(G)$, then $|C(u)|$ is an eigenvalue of multiplicity $|\mf{F}_u|-1$.
					\item The order of the group, $|G|$ is an eigenvalue.
					If $G$ is non-commutative, then the multiplicity of $|G|$ is at least $|Z(G)| $. If $G$ is commutative, then the multiplicity of $|G|$ is $|Z(G)|-1$.
					
					\item The remaining eigenvalues are the eigenvalues of $[L_{\com{G}}/\mf{G}] $, that is
					$\sigma([L_{\com{G}}/\mf{G}])\subseteq \sigma(L_{\com{G}})$. If $x\in \Omega_{\lambda}([L_{\com{G}}/\mf{G}])$ for any $\lambda\in \sigma([L_{\com{G}}/\mf{G}])$, then the blow up $\ov{x}\in \Omega_{\lambda}(L_{\com{G}})$.
					
				\end{enumerate}
			\end{thm}
			\begin{proof} If $G$ is commutative then $G\setminus Z(G)=\emptyset$, and thus part (1) follows trivially. In that case, $\com{G}$ is a complete graph so $|G|$ is an eigenvalue of multiplicity $|Z(G)|-1$. Since $\mf{G}$ is a singleton set, only eigenvalue of $[L_\G/\mf{G}]$ is $0$, which is also an eigenvalue of $L_{\com{G}}.$ 
				
				If $G$ is  non-commutative, then the
				part (1) of the result follows from \Cref{cu}, and \Cref{lemtu}. For part (2), if $Z(G)=\{u_0,\ldots,u_k\}$ then by \Cref{cu}, $|G|$ is a an eigenvalue of  $L_{\com{G}}$ with eigenvector $y_i=\chi_{\{u_i\}}-\chi_{\{u_0\}}$, for all $i=1,\ldots,k$. Since, using \Cref{z(g)-contract}, we have $\{x_{\small Z(G)},y_1,\ldots,y_k\}$ are  $|Z(G)|$ many linearly independent eigenvectors corresponding to $|G|$, part (2) of the result follows. Part (3) directly follows from \Cref{l-contraction}.
				
				Since from \Cref{cu}, \Cref{l-contraction}, and \Cref{z(g)-contract}, it is clear that the $|G|$ number of eigenvectors corresponding to the listed eigenvalues are linearly independent, this list provides the complete spectrum of  $L_{\com{G}}$.
			\end{proof}
			\begin{ex}[$\mf{S}_4$]\rm
				We have provided the commuting graph of $\mf{S}_4$ in \Cref{s4initial} (\Cref{fig:S-4}).  We enlist $\sigma(L_{\com{\mf{S}_4}})$ below by using \Cref{complete-l}.
				\begin{enumerate}[leftmargin=*]
					\item Since $C(18)=\{1,8, 18,23\}$, and $\mf{F}_{18}=\{18,23\}$, we have $|C(18)|=4$ is an eigen value of multiplicity at least $1$. Similarly, 
					\begin{enumerate}
						\item  $C(7)=\{1,2,7, 8\}$, and $\mf{F}_{7}=\{2,7\}$; 
						\item $C(3)=\{1,3,22,24\}$, and $\mf{F}_{3}=\{3,22\}$;
						\item $C(11)=\{1,11,14,24\}$, and $\mf{F}_{11}=\{11,14\}$;
						\item $C(10)=\{1,10,19,17\}$, and $\mf{F}_{10}=\{10,19\}$;
						\item $C(6)=\{1,6,15,17\}$, and $\mf{F}_{6}=\{6,15\}$.
					\end{enumerate}
					Thus, the multiplicity of $4$ is at least $6$.
					\item Similarly, because of $C(16), C(12), C(4),$ and $ C(9)$, we have $3$ is an eigenvalue of $L_{\com{\mf{S}_4}}$ with multiplicity at least $4$.
					\item The remaining $14$ eigenvalues are the eigenvalues of the matrix $[L_{\com{\mf{S}_4}}/\mf{G}] $ of order $14$. One of them is $|\mf{S}_4|=24$ with multiplicity $|Z(G)|=1$. Another one is $0$ with multiplicity $1$.
				\end{enumerate}
			\end{ex}
			\begin{ex}[Finite pseudoreflection Group]\rm\label{pseu}
				A \emph{pseudoreflection} on $\C^d$ is a linear homomorphism $\sigma: \C^d \rightarrow \C^d$ such that $\sigma$ has finite order in $GL(d,\mb C)$ and the rank of $id - \sigma$ is 1. A group generated by pseudoreflections is called a pseudoreflection group. If the order of a pseudoreflection is $2,$ we call it a reflection. In a pseudorflection group, a Coxeter element is a product of all reflections of the group. In particular, if a pseudoreflection group is a Coxeter group, a product of all generators is a Coxeter element, see \cite[29-1, p. 299]{MR1838580}. By \cite[Theorem B, p. 300]{MR1838580}, if a finite reflection group is irreducible, then the order of any Coxeter element is $h,$ where $h$ is the Coxeter number of the group. Two Coxeter elements $u$ and $v$ are conjugate, so $|C(u)| = |C(v)|.$  Moreover, for every  Coxeter element $u$, $C(u)$ is cyclic and $|C(u)|=h$.
				\begin{enumerate}[leftmargin=*]
					\item 
					Let $G$ be a finite irreducible pseudorflection group with Coxeter number $h>2.$ From \Cref{complete-l}, $h$ is an eigenvalue of the Laplacian matrix of the commuting graph $\mathcal C_G$ associated to the group $G.$ The multiplicity of the eigenvalue $h$ is at least $k(h-|Z(G)|-1),$ where the constant $k = |\{[u]: u \text{~is a Coxeter element}\}|$ and $[u]=\{v: v \text{~is Coxeter and~} C(u) =C(v) \}.$
					\item \label{dihed}
					The dihedral group $D_{2m}= \langle x,y: x^m=y^2=id \text{~and~} yx=x^{m-1}y\rangle$ is a finite pseudoreflection group. The group $D_{2m}$ has Coxeter number $m.$ Therefore, $m$ is an eigenvalue of the Laplacian of the commuting graph $\mathcal C_{D_{2m}}$ with multiplicity at least $k(m - |Z(G)|-1).$ Moreover, if $m>2$ is odd, $|Z(G)|=1$ and then the multiplicity of $m$ will be at least $k(m-2).$
					\item  \label{sym}
					Consider the permutation group $\mathfrak S_n$ on $n$ symbols. This is a finite reflection group with $\binom{n}{2}$ reflections. Let $n \geq 3.$  The Coxeter number of $\mathfrak S_n$ is $\frac{2\binom{n}{2}}{n}=n-1.$ So, $n-1$ is an eigenvalue of the Laplacian matrix of the commuting graph $\mathcal C_{\mathfrak S_n}.$ The multiplicity of $n-1$ is at least $k(n-3).$ 
				\end{enumerate}
				
			\end{ex}
			In the next result, we show that some information of $G$ is encoded in $ \sigma(L_{\com{G}})$.
			
			\begin{cor}\label{g_info_l}
				Let $G$ be a group. 
				\begin{enumerate}
					\item  The cardinality of the center is 
					$$|Z(G)|=(m_1-m_2)+1,$$
					where the multiplicity of $|G|$ in $ \sigma(L_{\com{G}}) $ is $m_1(\ge 0)$, and $m_2(\ge 0)$ is the multiplicity of $|G|$ in $\sigma([L_\G/\mf{G}])$.
					\item For each eigenvalue $\lambda \in  \sigma(L_{\com{G}})\setminus \sigma([L_\G/ \mf{G}])$, there exists $u\in G$, such that $\lambda=C(u)$.
				\end{enumerate}
			\end{cor}
			\begin{proof}
				Since by \Cref{l-contraction}, multiplicity of $ |G|$ in $\sigma(L_{\com{G}})\setminus \sigma([L_\G/ \mf{G}])$ is exactly $|Z(G)|-1$, if the multiplicity of  $ |G|\in \sigma([L_\G/ \mf{G}])$ is $m_2$ (which can be $0$) then $m_1-m_2=|Z(G)|-1$. Thus, the result (1) follows.
				The result (2) directly follows from \Cref{l-contraction}.
			\end{proof}
			In the next result, we show that \Cref{complete-l} leads us to a class of graphs that can not be the commuting graph of any group.
			\begin{cor}\label{nongl}
				Let $\G$ be a graph. If $|V(\G)|$ is not an eigenvalue of $L_{\G}$, then there exists no group $G$ such that $\com{G}=\G$.
			\end{cor}
			\begin{proof}
				If possible, let there exists a group such that $\com{G}=\G$, then by \Cref{complete-l}, $|V(\G)|$ is an eigenvalue of  $L_{\G}$, a contradiction. Thus, our assumption is wrong, and there exists no group $G$ such that  $\com{G}=\G$.
			\end{proof}
			In the next result, we provide a spectral condition to identify the situation when two particular elements of a group can not share the same centralizer.
			\begin{cor}\label{equalcentralizer}
				Let $G$ be a group. For two distinct $u,v\in G$, if there does not exists $\lambda\in \sigma(L_{\com{G}})$ such that $L_{\com{G}}(\chi_{\{u\}}-\chi_{\{v
					\}})=\lambda (\chi_{\{u\}}-\chi_{\{v
					\}})$ then $C(u)\ne C(v)$.
			\end{cor}
			\begin{proof}
				If possible let $C(u)=C(v)$, then by the proof of \Cref{nbd-eig}, and \Cref{complete-l}, $|C(u)|$ is an eigenvalue of $L_{\com{G}}$ with eigenvector $\chi_{\{u\}}-\chi_{\{v
					\}} $, which is a contradiction to the fact that there does not exists $\lambda\in \sigma(L_{\com{G}})$ such that $L_{\com{G}}(\chi_{\{u\}}-\chi_{\{v
					\}})=\lambda (\chi_{\{u\}}-\chi_{\{v
					\}})$ then $C(u)\ne C(v)$. This completes the proof.
			\end{proof}
			So far we have seen $\sigma( [L_\G/\mf{G}])\subseteq  \sigma(L_{\com{G}})$. If $G$ is such that $\mf{G}$ contains a small number of elements, then the order of the matrix $[L_\G/\mf{G}]$ is very small. In that case, one can calculate the spectrum of $[L_\G/\mf{G}]$ easily. 
			
			If $G$ is such that $\mf{G}$ contains many elements, then it might be difficult to calculate $\sigma([L_\G/\mf{G}])$. Therefore, examining certain structures within $G$ that result in symmetry within the graph $\G_{G/\mf{G}}$ is worthwhile.  The symmetries inherent in the structure of $\G_{G/\mf{G}}$ provide some insight into certain eigenvalues of $[L_\G/\mf{G}]$.
			
			A \emph{pendant vertex} in a graph $\G$ is a vertex $v(\in V(\G))$ incident to only one edge in $\G$. For any group $G$, we refer to an element $u\in G\setminus Z(G)$ as a \emph{pendant element} if for all $v\in G\setminus Z(G) $,
			\begin{equation}\label{cond}
				\text{either~}   C(u)=C(v) \text{~or~} C(u)\cap C(v)=Z(G).
			\end{equation}
			we refer two pendant $u,v\in G$ are \emph{equivalent pendant} if $C(u)=C(v)$, otherwise we refer to them as \emph{non-equivalent} pendants. 
			\begin{prop}\label{pendant class}
				Let $G$ be a group. If $u\in G\setminus Z(G)$ is a pendant element then $\mf{F}_u$ is a pendant vertex in $\G_{G/\mf{G}}$.
			\end{prop}
			\begin{proof}
				If $\mf{F}_0=Z(G)$ then $\mf{F}_u$ is adjacent to $\mf{F}_0$. Now, it is enough to prove that for all $\mf{F}_v\in \mf{G}\setminus \{\mf{F}_0,\mf{F}_u\}$, $ \mf{F}_u$ is not adjacent to $\mf{F}_v$ in $\G_{G/\mf{G}}$. If possible, let, $ \mf{F}_u$ is adjacent to $\mf{F}_v$ in $\G_{G/\mf{G}}$. Thus, $u,v$ commutes in $G$. Therefore, $v\in C(u)\cap C(v)$, and since $ \mf{F}_v\ne \mf{F}_0$, $v\notin Z(G)$. This leads us to 
				$C(u)\cap C(v)\ne Z(G)$. Now, by using \Cref{cond},
				$C(u)=C(v)$, which is a contradiction to the fact $ \mf{F}_u\ne \mf{F}_v$. Therefore, our assumption is wrong, and $\mf{F}_u$ is not adjacent to $\mf{F}_v$ in $\G_{G/\mf{G}}$.
			\end{proof}
			The following result demonstrates that the existence of a pair of pendants within a group $G$ results in symmetry within $\G_{G/\mf{G}}$ and that symmetry leaves its traces in $\sigma(L_{\com{G}})$.
			\begin{thm}\label{pendant_l}
				If $G$ contains at least two non-equivalent pendent elements then $ |Z(G)|\in \sigma(L_{\com{G}})$. If the number of the  pendant vertices in $\G_{G/\mf{G}}$ in $G$ is $p$, then the multiplicity of $|Z(G)|$ is at least $p-1$.
			\end{thm}
			\begin{proof}
				Let $G$ be a graph, and $u,v\in G$ are two pendants in $G$. Thus, $\mf{F}_u,\mf{F}_v$ are two pendant vertex in $\G_{G/\mf{G}}$. Our claim is $[L_\G/\mf{G}]x_{uv}=|Z(G)| x_{uv}$, where $x_{uv}:\mf{G}\to\mathbb{R}$, is defined by 
				$$x_{uv}(\mf{F}_i)=
				\begin{cases}
					\phantom{-}|\mf{F}_v| &\text{~if~} \mf{F}_i=\mf{F}_u,\\
					- |\mf{F}_u| &\text{~if~} \mf{F}_i=\mf{F}_v,\\
					\phantom{-}0&\text{~otherwise.}
				\end{cases}$$
				Let $\mf{G}=\{\mf{F}_0,\ldots,\mf{F}_m\}$, with $\mf{F}_0=Z(G)$. Now, we get the following.
				\begin{enumerate}
					\item For $\mf{F}_i\in \mf{G}\setminus \{\mf{F}_0,\mf{F}_u,\mf{F}_v\}$, since $\mf{F}_i$ is not adjacent to $\mf{F}_u$, $\mf{F}_v$, either $l_{ij}=0$ or $x_{uv}(\mf{F}_j)=0$, for all $ \mf{F}_j\in\mf{G}$. Therefore,
					\begin{align*}
						( [L_\G/\mf{G}] x_{uv})(\mf{F}_w)&=\d\limits_{j:\mf{F}_j\in\mf{G}}l_{ij}x_{uv}(\mf{F}_j)=0.
					\end{align*}
					\item  For $\mf{F}_i=\mf{F}_u$, since $\mf{F}_i$ is adjacent to only $\mf{F}_0$,
					\begin{align*}
						( [L_\G/\mf{G}] x_{uv})(\mf{F}_i)&=\d\limits_{j:\mf{F}_j\in\mf{G}}l_{ij}x_{uv}(\mf{F}_j)\\
						&=l_{ii}x_{uv}(\mf{F}_i)+l_{i0}x_{uv}(\mf{F}_0)\\
						&=|\mf{F}_0||\mf{F}_v|=|Z(G)|x_{uv})(\mf{F}_i)\\
						&=|Z(G)|x_{uv}(\mf{F}_i).
					\end{align*}
					\item  For $\mf{F}_i=\mf{F}_v$, since $\mf{F}_i$ is adjacent to only $\mf{F}_0$, similar to the previous case, we have
					$
					( [L_\G/\mf{G}] x_{uv})(\mf{F}_i)=|Z(G)|x_{uv}(\mf{F}_i).
					$
					\item For $\mf{F}_i=\mf{F}_0$, 
					\begin{align*}
						( [L_\G/\mf{G}] x_{uv})(\mf{F}_0)&=\d\limits_{j:\mf{F}_j\in\mf{G}}l_{ij}x_{uv}(\mf{F}_j)\\
						&=|\mf{F}_u||\mf{F}_v|-|\mf{F}_u||\mf{F}_v|=0.
					\end{align*}
				\end{enumerate}
				Therefore, $[L_\G/\mf{G}] x_{uv}=|Z(G)|x_{uv}$. Thus, by \cref{l-contraction}, $|Z(G)|$ is an eigenvalue of $L_{\com{G}}$ with eigenvector $\ov{x_{uv}}:G\to \mathbb{R}$ defined by $\ov{x_{uv}}(w)=x_{uv}(\mf{F}_w)$ for all $w\in G$. If $\{v_1,\ldots, v_p\}$ is the collection of all the pendants in $G$, then $\ov{x_{p1}},\ldots,\ov{x_{pp-1}}$ is the collection of $p-1$ independent eigenvectors corresponding the eigenvalue $|Z(G)|$ of $L_{\com{G}}$. Thus, the multiplicity of the eigenvalue $|Z(G)|$ is at least $p-1$.
			\end{proof}
			\begin{ex}
				In \Cref{fig:contraction}, we can see $\Gamma_{\mf{S}_4/\mathfrak{G}}$ has 
				$4$ pendants, namely $\mf{F}_9$, $\mf{F}_4$, $\mf{F}_{12}$, and $\mf{F}_{16}$. Thus, $|Z(\com{\mf{S}_4})|=1
				$ is an eigenvalue of $L_{\com{\mf{S}_4}}$ with multiplicity $3$. 
			\end{ex}
			Using \Cref{g_info_l}, we can find the value of $|Z(G)|$ encrypted in $\sigma(L_{\com{G}})$, and along with this information, \Cref{pendant_l} leads us to the following result that enables us to decode another information of a group $G$ from $\sigma(L_{\com{G}})$.
			\begin{cor}\label{zg-cor}
				Let $G$ be a group if $|Z(G)|\notin \sigma(L_{\com{G}})$, then $\G_{G/\mf{G}}$ has at most one  pendant.
			\end{cor}

			It might be a lengthy process to calculate the matrix $[L_\G/\mf{G}]$ and its spectrum. Now we provide a favourable situation, where $\sigma([L_\G/\mf{G}])$ can be calculated easily. Since the next result directly follows from \Cref{complete-l}, and \Cref{pendant_l}, we state it without proof.
			\begin{cor}\label{all-pendant}
				Let $G$ be a group.
				If for all $u,v\in G\setminus Z(G)$  
				\begin{equation}\label{con} \text{~either~} C(u)=C(v) \text{~or~} C(u)\cap C(v)=Z(G),\end{equation} 
				then the complete spectrum of $L_{\com{G}}$ is the following.
				\begin{enumerate}
					\item  $|C(u)|$ is an eigenvalue of multiplicity $|\mf{F}_u|-1$  for each $u\in G\setminus Z(G)$.
					\item $|G|$ is an eigenvalue of multiplicity $|Z(G)| $.  
					\item $|Z(G)|$ is an eigenvalue with multiplicity $|\mf{G}|-2$.
					\item $0$ is an eigenvalue with multiplicity $1$.
				\end{enumerate}
			\end{cor}
			\begin{proof}
				Since the condition given by \Cref{con} holds, by \Cref{pendant class}, all the $\mf{F}_i(\ne Z(G))\in \mf{G}$ are pendant vertices in  $\G_{G/\mf{G}}$. Thus, the result follows from \Cref{pendant_l}.
			\end{proof}
			If all the $\mf{F}_i(\ne Z(G))\in \mf{G}$ are pendants, then $\ov{\G_{G/\mf{G}}}$ has two connected components, and thus, multiplicity of $|G|$ is $|Z(G)|$. Similarly, for any group $G$, if $\ov{\G_{G/\mf{G}}}$ has exactly two connected components then  multiplicity of the eigenvalue $|G|$ of $L_{\com{G}}$ is $|Z(G)|$.
			
			Now we illustrate our results using some examples.
			A group is said to be \emph{centralizer abelian group} if the centralizer of any nonidentity element is an abelian subgroup, see \cite[p. 686]{MR86818}, \cite[p. 291]{MR815926}. The following lemma shows that \Cref{all-pendant} describes the complete list of eigenvalues (with multiplicities) of the Laplacian matrix of the graph $\m C_G$ while $G$ is a centralizer abelian group.
			\begin{lem}
				Let $G$ be a centralizer abelian group. Then all the elements of $G$ are pendants.
			\end{lem}
			\begin{proof}
				Suppose $u,v \in G$ such that $Z(G) \subsetneq C(u)\cap C(v).$ Then there exists $a \notin Z(G)$ such that $a \in C(u)\cap C(v).$ Note that $a \in C(u),$ so $u \in C(a).$ Again, $C(a)$ is abelian, so if $x \in C(a),$ then $xu=ux$, that is, $x \in C(u).$ In other words, $C(a) \subseteq C(u).$ Similarly, $C(u) \subseteq C(a).$ Therefore, $C(u)= C(a)$. Also, similar arguments as above prove that $C(a) = C(v).$ Hence, the result follows.
			\end{proof}
			
			An abelian group is always a centralizer abelian group. Among the following examples, $\mathfrak S_3$ is a centralizer abelian group. However, the groups $\mathfrak S_4,$ $D_8$ and $Q_8$  are not centralizer abelian groups.
			
			\begin{ex}[$\mathfrak S_3$]\rm \label{s3} 
				The symmetric group $\mathfrak S_3=\{v_0,v_1,\ldots, v_5\}$ is a non-abelian group, where $v_0$ is the identity permutation, $v_1=(1,2,3)$, $v_2=(1,3,2)$, $v_3=(1,2)$, $v_4=(2,3)$, $ v_5=(1,3)$.
				It is easy to see that $Z(\mathfrak S_3)=\{v_0\}$. Note that \begin{itemize}
					\item $C(v_1)=C(v_2)=\{v_0,v_1,v_2\}$ and
					\item $C(v_3)=\{v_0,v_3\}, ~ C(v_4)=\{v_0,v_4\}, ~ C(v_5)=\{v_0,v_5\}.$
				\end{itemize} 
				In the symmetric group on $3$ symbols, all the elements are pendants.
				
				\begin{figure}[ht]
					\centering
					\begin{subfigure}[b]{0.49\textwidth}
						\centering
						\begin{tikzpicture}
							\draw[](-2.02,1.09)circle (2.5pt);
							\draw[](-2.3,1.35) node{$v_1$};
							\draw[](-2.02,-1.09)circle (2.5pt);
							\draw[](-2.3,-1.35) node{$v_2$};
							\filldraw[color=black](-1,0)circle (2.5pt);
							\draw[](-1,0.5) node{$v_0$};
							\draw[](0.07,0)circle (2.5pt);
							\draw[](0.4,0) node{$v_4$};
							\draw[](0.07,1.03)circle (2.5pt);
							\draw[](0.4,1.35) node{$v_3$};
							\draw[](0.07,-1.03)circle (2.5pt);
							\draw[](0.4,-1.35) node{$v_5$};
							\draw[] (-2,1)--(-2,-1)--(-1,0)--(-2,1);
							\draw[](-1,0)--(0,1);
							\draw[](-1,0)--(0,0);
							\draw[](-1,0)--(0,-1);
						\end{tikzpicture}
						\label{fig:s3}
						\caption{$\mathcal{C}_{\mathfrak S_3}$: Commuting Graph associated to $\mathfrak S_3$}
					\end{subfigure}
					\hfill
					\begin{subfigure}[b]{0.49\textwidth}
						\centering
						\begin{tikzpicture}
							\draw[](-2.3,1.35) node{$\begin{pmatrix}
									5&-1&-1&-1&-1&-1\\-1&2&-1&0&0&0\\-1&-1&2&0&0&0\\-1&0&0&1&0&0\\-1&0&0&0&1&0\\-1&0&0&0&0&1
								\end{pmatrix}$};
						\end{tikzpicture}
						\caption{L : Laplacian matrix of $\mathcal{C}_{\mathfrak S_3}$}
						\label{fig:laps3}
					\end{subfigure}
					\hfill
				\end{figure}
				We write the complete list of eigenvalues of $L_{\mathcal{C}_{\mathfrak S_3}}$ here using \Cref{all-pendant}. We arrange the eigenvalues in increasing order by $0 = \lambda_1 \leq \ldots \leq \lambda_6.$
				\begin{enumerate}[leftmargin=*]
					\item By \Cref{all-pendant} (1), $|C(v_1)|= 3$ with multiplicity  $|C(v_1)\setminus Z(\mathfrak S_3)|-1= 1$.
					\item It follows from \Cref{all-pendant} (2) that the eigenvalue $|\mathfrak S_3|= 6$ is with multiplicity $|Z(G)|=1.$
					
					\item 
					By \Cref{all-pendant} (3), we get that $|Z(\mathfrak S_3)|=1$ is an eigenvalue with multiplicity $|\mf{G}|-2=3$. That implies $\lambda_2 = \lambda_3 =\lambda_4 = 1.$
					
					\item Since $\mathcal{C}_{\mathfrak S_3}$ is a connected graph, $\lambda_1 =0$ with multiplicity $1$.
				\end{enumerate}
				It is intriguing to note that $|C(v_i)|=2$ for $i=3,4,5,$ are not included in the list of eigenvalues as 
				$|\mf{F}_i|-1=0$, for all $i=3,4,5$. This can be seen from \Cref{pseu}(\ref{sym}) as well. 
			\end{ex}
			\begin{figure}[ht]
				\centering
				\begin{subfigure}[b]{0.55\textwidth}
					\centering
					\rotatebox{270}{  \begin{tikzpicture}[scale=0.25]
							\filldraw[color=black](0,2)circle (10pt); 
							\draw[color=black,very thick](0,3.5) node[rotate=90]{$id$};
							\draw[color=black,very thick](0,-3.5) node[rotate=90]{$x^2$};
							\filldraw[color=black](0,-2)circle (10pt); 
							\draw[](-5,2)circle (10pt);
							\draw(-5.3,3.5) node[rotate=90]{$x$};
							\draw[](-5,-2)circle (10pt); 
							\draw (-5.8,-3.5) node[rotate=90]{$x^3$};
							\draw[](5,2)circle (10pt);
							\draw (6.8,2) node[rotate=90]{$x^2y$};
							\draw[](5,-2)circle (10pt); 
							\draw[](7,-2) node[rotate=90]{$xy$};
							\draw[](5,6)circle (10pt);
							\draw[](7,6.5) node[rotate=90]{$y$};
							\draw[](5,-6)circle (10pt); 
							\draw[](6.8,-6.5) node[rotate=90]{$x^3y$};
							
							\draw[](0.1,1.6)--(0,-1.7);
							\draw[](-4.9,1.7)--(-5,-1.7);
							\draw[](-0.2,1.8)--(-4.8,-1.7);
							\draw[](-4.7,1.9)--(-0.2,-1.8);
							\draw[](0.3,2)--(4.7,2);
							\draw[](-4.7,2)--(-0.3,2);
							\draw[](-4.7,-2)--(-0.3,-2);

							\draw[](0.3,-2)--(5,1.7);
							\draw[](0.3,2.3)--(4.7,6);
							\draw[](0.1,-1.7)--(4.9,5.7);
							\draw[](5,2.3)--(5,5.7);

							\draw[](0.3,-2)--(4.7,-2);
							\draw[](0.3,-2.3)--(4.7,-6);
							\draw[](5,-2.3)--(5,-5.7);
							\draw[](4.8,-1.8)--(0.2,1.8);
							\draw[](0.1,1.7)--(4.9,-5.8);
					\end{tikzpicture}  } 
					\caption{$\mathcal{C}_{D_8}$}
					\label{fig:D}
				\end{subfigure}
				\hfill
				\begin{subfigure}[b]{0.44\textwidth}
					\centering
					\rotatebox{270}{ \begin{tikzpicture}[scale=0.25]
							\filldraw[color=black](0,2)circle (10pt);  \draw[color=black](0,4) node[rotate=90]{$id$};
							\draw[color=black,very thick](0,-4) node[rotate=90]{$a$};
							\filldraw[color=black](0,-2)circle (10pt); 
							\draw[](-5,2)circle (10pt);
							\draw[](-5,4) node[rotate=90]{$b$};
							\draw[](-5,-2)circle (10pt); 
							\draw[](-5,-4) node[rotate=90]{$ab$};
							\draw[](5,2)circle (10pt);
							\draw[](7,2) node[rotate=90]{$ac$};
							\draw[](5,-2)circle (10pt); 
							\draw[](7,-2) node[rotate=90]{$ad$};
							\draw[](5,6)circle (10pt);
							\draw[](7,6) node[rotate=90]{$c$};
							\draw[](5,-6)circle (10pt); 
							\draw[](7,-6) node[rotate=90]{$d$};
							
							\draw[](0.1,1.6)--(0,-1.7);
							\draw[](-4.9,1.7)--(-5,-1.7);
							\draw[](-0.2,1.8)--(-4.8,-1.7);
							\draw[](-4.7,1.9)--(-0.2,-1.8);
							\draw[](0.3,2)--(4.7,2);
							\draw[](-4.7,2)--(-0.3,2);
							\draw[](-4.7,-2)--(-0.3,-2);

							\draw[](0.3,-2)--(5,1.7);
							\draw[](0.3,2.3)--(4.7,6);
							\draw[](0.1,-1.7)--(4.9,5.7);
							\draw[](5,2.3)--(5,5.7);

							\draw[](0.3,-2)--(4.7,-2);
							\draw[](0.3,-2.3)--(4.7,-6);
							\draw[](5,-2.3)--(5,-5.7);
							\draw[](4.8,-1.8)--(0.2,1.8);
							\draw[](0.1,1.7)--(4.9,-5.8);
					\end{tikzpicture}   }
					\caption{$\mathcal{C}_{Q_8}$}
					\label{fig:Q}
				\end{subfigure}
				\hfill
				\caption{Commuting graphs associated to $D_8$ and $Q_8$}
				\label{fig:DQ}
			\end{figure}
			
			\begin{ex}[Groups of order $8$]\label{oder8}\rm
				Let $\mb{Z}_n$ denote the cyclic group of order $n$. Up to isomorphism, there are exactly $5$ groups of order $8$ and those are $\mb{Z}_8$,
				$\mathbb{Z}_4\times \mathbb{Z}_2 $,
				$\mathbb{Z}_2\times \mathbb{Z}_2\times \mathbb{Z}_2$,
				$ D_8$ (Dihedral Group),
				$ Q_8$ (Quaternion Group). We enlist some observations on the commuting graph associated with any group of order $8$.
				
				\textbf{Abelian groups:}
				Note that the first three groups of the above list are Abelian. Hence, the commuting graph associated with each of those is given by the complete graph $K_8$ and thus, the eigenvalues of the Laplacian of the commuting graph are $0$ and $8$ with multiplicity $1$ and $7$, respectively.
				
				The groups $D_8$ and $Q_8$ are non-abelian. We provide a detailed description of the Laplacian spectrum of $\mathcal{C}_{D_8}$ and $\mathcal{C}_{Q_8}$ below.
				
				\begin{enumerate}[leftmargin=*]
					\item \textbf{Dihedral group}\label{D}
					The set of symmetries of a square is given by the Dihedral group $D_8= \langle x,y: x^4=y^2=id \text{~and~} yx=x^3y\rangle.$ 
					Note that 
					\begin{itemize}
						\item $Z(D_8)=\{id,x^2\}$,
						\item $C(x^2y)=C(y)=\{id,x^2,y,x^2y\}$,
						\item $C(xy)=C(x^3y)=\{id,x^2,xy,x^3y\}$,
						\item $C(x)=C(x^3)=\{id,x,x^2,x^3\}$.
					\end{itemize}
					Clearly, $D_8$ is a non-abelian group that satisfies the condition in Equation \eqref{con}. The eigenvalues of the Laplacian matrix $L_{\mathcal C_G}$ arranged in increasing order by $0= \lambda_1 \leq \ldots \leq \lambda_8,$ are described below.
					\begin{enumerate}
						\item Since $L_{\com{D_8}}$ is Laplacian of a connected graph, $\lambda_1 =0$ with multiplicity $1$.
						\item By \Cref{all-pendant}(2), we get that $|Z(D_8)|=2$ is an eigenvalue with multiplicity $2$. That implies $\lambda_2 = \lambda_3 = 2.$
						\item Again, by \Cref{all-pendant}(1), $\lambda_4=|C(y)|= 4$ with multiplicity  $|\mf{F}_y|-1= 1$,
						\item $\lambda_5=|C(xy)|= 4$ with multiplicity  $|\mf{F}_{xy}|-1=1$, and
						\item $\lambda_6=|C(x)|= 4$ with multiplicity  $|\mf{F}_{x}|-1= 1.$
						\item Moreover, it follows from \Cref{all-pendant} (2) that the eigenvalue $|D_8|= 6$ is with multiplicity $|Z(G)|=2.$ Hence $\lambda_7=\lambda_8=8$.
					\end{enumerate}
					It is interesting to note that $D_8$ is a finite pseudoreflection group with Coxeter number $4.$ By Example \ref{dihed}, the eigenvalue $4$ will repeat at least $k(4-2-1)=k$ times. Here, $k=2$ is less than the multiplicity of the eigenvalue $4.$
					\item \textbf{(Quaternion group) }\label{Q}\rm
					Consider the Quaternion group $ Q_8=\langle a,b,c,d:a^2=id,b^2=c^2=d^2=bcd=a\rangle$, where $id$ is the identity element. 
					Note that
					\begin{itemize}
						\item $Z(Q_8)=\{id,a\}$,
						\item $C(b)=C(ab)=\{id,a,b,ab\}=\{id,b^2,b,b^3\}$,
						\item $C(c)=C(ac)=\{id,a,c,ac\}=\{id,c^2,c,c^3\}$,
						\item $C(d)=C(ad)=\{id,a,d,ad\}=\{id,d^2,d,d^3\}$.
					\end{itemize}
					Clearly, $Q_8$ is a non-abelian group which satisfies the condition given in Equation \eqref{con}. The associated commuting graph $\mathcal C_{Q_8}$ is drawn in Figure \ref{fig:Q}. Clearly, the graph $C_{Q_8}$ and $\mathcal C_{D_8}$ are identical.  Hence, the complete list of eigenvalues of the Laplacian matrix of $\mathcal{C}_{Q_8}$ is same as the Laplacian spectrum of the graph $\mathcal C_{D_8}.$
					
					Though $D_8$ and $Q_8$ are not isomorphic to each other, the associated commuting graphs $\mathcal{C}_{D_8}$ and $\mathcal{C}_{Q_8}$, drawn in Figure \ref{fig:D} and  Figure \ref{fig:Q} respectively, are same. So is the Laplacian matrices associated to $\mathcal{C}_{D_8}$ and $\mathcal{C}_{Q_8}$. We denote it by $L_8.$ Therefore, one has $$ L_8 =  \begin{pmatrix}
						7&-1&-1&-1&-1&-1&-1&-1\\-1&7&-1&-1&-1&-1&-1&-1\\-1&-1&3&-1&0&0&0&0\\-1&-1&-1&3&0&0&0&0\\-1&-1&0&0&3&-1&0&0\\-1&-1&0&0&-1&3&0&0\\-1&-1&0&0&0&0&3&-1\\-1&-1&0&0&0&0&-1&3
					\end{pmatrix}.$$
					One can easily check that if two finite groups $G_1$ and $G_2$ are isomorphic to each other, then $\m C_{G_1} = \Gamma(G_1, E_1)$ and $\m C_{G_2} = \Gamma(G_2, E_2)$ are identical, that is, there exists a bijection between $E_1$ and $E_2.$ But the above examples show that the converse is not necessarily true.

				\end{enumerate}          
			\end{ex}
			A graph $\G$ is called \emph{Laplacian integral} if $\sigma(L_{\G})\subseteq\mathbb{Z} $, the set of all integers. Since all the eigenvalues provided in \Cref{pendant_l} are integers, we have the following result.
			\begin{cor}
				Let $G$ be a group.
				If  for all $u,v\in G\setminus Z(G)$  
				$ \text{~either~} C(u)=C(v) \text{~or~} C(u)\cap C(v)=Z(G),$
				then $\com{G}$ is Laplacian integral. 
			\end{cor}
			In \Cref{s3}, and \Cref{oder8}, we have seen $\mf{S}_3$, $D_8$, and $Q_8$ are Laplacian integral.

			The condition in \Cref{con} implies that $u,v\in G\setminus Z(G)$ are such that each of $\mf{F}_u$, and $\mf{F}_v$ has only one neighbour $\mf{F}_0=Z(G)$ in $\G_{G/\mf{G}}$. Now we will consider the situation where $\mf{F}_u$, and $\mf{F}_v$ can have more than one neighbour but they are non-adjacent twins in $\G_{G/\mf{G}}$.
			\begin{thm}\label{non_nbd}
				Let $G$ be a group. If $u,v\in G$ such that
				\begin{enumerate}
					\item $uv\ne vu$,
					\item   $uw=wu$ if and only if $vw=wu$ for all $w\in G\setminus(\mf{F}_u\cup \mf{F}_v)$,
				\end{enumerate}
				then $|C(u)\cap C(v)|$ is an eigenvalue of $L_{\com{G}}$.
			\end{thm}
			\begin{proof} To prove this result, we show that $|C(u)\cap C(v)|$ is an eigenvalue of $[L_\G/\mf{G}]$.
				Since  $uv\ne vu$, we have $C(u)\ne C(v)$, and $\mf{F}_u\ne\mf{F}_v$. Since $uw=wu$ if and only if $vw=wu$,  for all $w\in G\setminus(\mf{F}_u\cup \mf{F}_v)$, in $\G_{G/\mf{G}}$ two vertices $\mf{F}_u$, $\mf{F}_v$, have the same neighbours, that is $N_{\G_{G/\mf{G}}}(u)=N_{\G_{G/\mf{G}}}(v)$. Therefore, $$\sum\limits_{\mf{F}_j\in N_{\G_{G/\mf{G}}}(\mf{F}_u)}|\mf{F}_j|=\sum\limits_{\mf{F}_j\in N_{\G_{G/\mf{G}}}(\mf{F}_v)}|\mf{F}_j|=|C(u)\cap C(v)|.$$

				Our claim is $[L_\G/\mf{G}]x_{uv}=|C(u)\cap C(v)| x_{uv}$, where $x_{uv}:\mf{G}\to\mathbb{R}$ is defined by 
				$$x_{uv}(\mf{F}_i)=
				\begin{cases}
					\phantom{-}|\mf{F}_v| &\text{~if~} \mf{F}_i=\mf{F}_u,\\
					- |\mf{F}_u| &\text{~if~} \mf{F}_i=\mf{F}_v,\\
					\phantom{-}0&\text{~otherwise.}
				\end{cases}$$
				Let $\mf{G}=\{\mf{F}_0,\ldots,\mf{F}_m\}$, with $\mf{F}_0=Z(G)$. Now, we get the following.
				\begin{enumerate}
					\item For $\mf{F}_i=\mf{F}_w\in\mf{G}\setminus\{\mf{F}_u,\mf{F}_v\}$, where $\mf{F}_w$ is adjacent to both $\mf{F}_u$, and $\mf{F}_u$,
					\begin{align*}
						(  [L_\G/\mf{G}]x_{uv})(\mf{F}_i)&=|\mf{F}_u|x_{uv}(\mf{F}_u)+|\mf{F}_v|x_{uv}(\mf{F}_v)=0.
					\end{align*}
					\item If $\mf{F}_i$ is not adjacent with $\mf{F}_u$, and $\mf{F}_v$ then either $l_{ij}=0$ or $x_{uv}(\mf{F}_j)=0$. Thus, 
					\begin{align*}
						(  [L_\G/\mf{G}]x_{uv})(\mf{F}_i)=0.
					\end{align*}
					\item If $\mf{F}_i=\mf{F}_u$, then $l_{ii}=\sum\limits_{j(\ne i):j\sim i}|\mf{F}_j|=\sum\limits_{\mf{F}_j\in N_{\G_{G/\mf{G}}}(\mf{F}_u)}|\mf{F}_j|=|C(u)\cap C(v)|$. Thus,
					\begin{align*}
						(  [L_\G/\mf{G}]x_{uv})(\mf{F}_i)&=l_{ii}x_{uv}(\mf{F}_u)=|C(u)\cap C(v)|x_{uv}(\mf{F}_u).
					\end{align*}
					\item Similarly, for $\mf{F}_i=\mf{F}_v $, we can show $(  [L_\G/\mf{G}]x_{uv})(\mf{F}_i)=|C(u)\cap C(v)|x_{uv}(\mf{F}_v)$.
				\end{enumerate}
				Therefore, $ [L_\G/\mf{G}]x_{uv}=|C(u)\cap C(v)|x_{uv}$. Thus, the result follows.
			\end{proof}
			For $u,v\in G\setminus Z(G)$ with $C(u)\cap C(v)=Z(G)$, we have $|Z(G)|=|C(u)\cap C(v)|$ is an eigenvalue of $L_{\com{G}}$ that is \Cref{pendant_l} can be proved as a corollary of \Cref{non_nbd}.
			\begin{ex}
				Since $18,2\in \mf{S}_4$ (see \Cref{fig:S-4}, and \Cref{s4initial}) does not commutes, but they share the same neighbours outside of $\mf{F}_2\cup\mf{F}_{18}$, by \Cref{non_nbd}, $ |C(2)\cap C(18)|=2$ is an eigenvalue of $L_{\com{G}}$.  Since $|C(2)\cap C(18)|=|C(3)\cap C(11)|=|C(10)\cap C(6)|=2 $, by the same argument, we have the multiplicity of $2$ is at least $3$.
			\end{ex}
			\subsection{Signless Laplacian spectra of the commuting graph of a group }
			Consider the matrix $[Q_{\com{G}}/\mf{G}]=(q_{ij})_{\mf{F}_i,\mf{F}_j\in\mf{G }}$ associated with $\G_{G/\mf{G}}$, defined as
			$$q_{ij}=\begin{cases}
				|\mf{F}_j|&\text{~if~}i\ne j \text{~and~} \mf{F}_i \text{ is adjacent to }\mf{F}_j,\\
				0&\text{~if~}i\ne j \text{~and~} \mf{F}_i \text{ is not adjacent to }\mf{F}_j,\\
				2(|\mf{F}_i|-1)+\sum\limits_{j(\ne i)=0}^ml_{ij} &\text{~if~}i=j.
			\end{cases} $$
			In the next result, we show any eigenvalue of $[Q_{\com{G}}/\mf{G}]$ is also an eigenvalue of $ Q_{\com{G}}$.
			\begin{thm}\label{q-contraction}
				Let $G$ be a group. If $\lambda$ is an eigenvalue of the matrix $[Q_{\com{G}}/\mf{G}]$ with eigenvector $x:\mf{G}\to\mathbb{R}$, then
				$\lambda$ is also an eigenvalue of $Q_{\com{G}}$, with eigenvector $\ov{x}:{G}\to\mathbb{R}$, defined by $\ov{x}(v)=x(\mf{F}_v)$ for all $v\in V(\G)$.
			\end{thm}
			\begin{proof}
				Let $v\in \mf{F}_i$, for some $i=1,\ldots,m$,
				\begin{align*}
					(Q_{\com{G}}\ov{x})(v)&=\sum\limits_{u(\ne v)\in C(v)}(\ov{x}(v)+\ov{x}(u))\\
					&=\sum\limits_{u(\ne v)\in \mf{F}_i}(\ov{x}(v)+\ov{x}(u))+\sum\limits_{j(\ne i):j\sim i}\sum\limits_{u\in \mf{F}_j}(\ov{x}(v)+\ov{x}(u))\\
					&=(|\mf{F}_i|-1)2x(\mf{F}_i)+\sum\limits_{j(\ne i):j\sim i}|\mf{F}_j|(x(\mf{F}_i)+x(\mf{F}_j))\\
					&=\left[2(|\mf{F}_i|-1)+\sum\limits_{j(\ne i):j\sim i}|\mf{F}_j|\right]x(\mf{F}_i)+\sum\limits_{j(\ne i):j\sim i}|\mf{F}_j|x(\mf{F}_j)\\
					&=([Q_{\com{G}}/\mf{G}]x)(\mf{F}_i).
				\end{align*}
				Thus, the result follows.
			\end{proof}
			The following result provides the complete spectra of $Q_{\com{G}}$.
			\begin{thm}[Complete spectrum of $Q_{\com{G}}$] \label{complete-q}
				Let $G$ be a group. The complete spectrum of $Q_{\com{G}}$ is listed below.
				\begin{enumerate}
					\item For all $v\in G$, $|C(v)|-2$ is an eigenvalue of multiplicity $|\mf{F}_v|-1$.
					\item The remaining eigenvalues are the eigenvalues of $[Q_{\com{G}}/\mf{G}] $, that is
					$\sigma([Q_{\com{G}}/\mf{G}])\subseteq \sigma(Q_{\com{G}})$. If $x\in \Omega_{\lambda}([Q_{\com{G}}/\mf{G}])$ for any $\lambda\in \sigma([Q_{\com{G}}/\mf{G}])$, then the blow up $\ov{x}\in \Omega_{\lambda}(L_{\com{G}})$.
					
				\end{enumerate}
			\end{thm}
			\begin{proof}
				The result follows from \Cref{nbd-eig} and \Cref{q-contraction}. We just need to show that the list is complete. 
				
				Suppose that $ \mf{G}=\{\mf{F}_0,\ldots,\mf{F}_m\}$, and $\mf{F}_i=\{v_{ij};j=0,1,\ldots,k_i\}$.
				Number of eigenvalues of the form $|C(v)|$, for all $v\in V(G)$ is $|G|-|\mf{G}|$. The set of correpnoding $|G|-|\mf{G}|$ linearly independent eigenvectors is $\mf{E}_{\mf{G}}=\{\chi_{u_{ij}}-\chi_{u_{i0}}:j=1,\ldots,k_i,i=0,\ldots,m\}$. The remaining $|\mf{G}|$ eigenvectors are $\ov{y_i}$, for $i=0,1,\ldots,m$, where $ \lambda_i$ is an eigenvalue of $[Q_{\com{G}}/\mf{G}]$ with eigenvector $y_i$, for $i=0,\ldots,m$. Since with respect to the usual inner product, each $y_i$ is orthogonal to any eigenvectors in $\mf{E}_{\mf{G}}$. Thus, $\mf{E}_{\mf{G}}\cup\{\ov{y_i}:i=0,1,\ldots,m\} $ is the complete list of eigenvectors of $Q_{\com{G}}$. Therefore, the list given in this result is the complete spectrum of $Q_{\com{G}}$.
			\end{proof}
			\begin{ex}\rm
				Using \Cref{complete-q}, we have the following eigenvalues of $Q_{\com{\mf{S}_4}}$.
				\begin{enumerate}[leftmargin=*]
					\item Since $|C(v)|=4$, and $|\mf{F}_v|=2$ for $v=7,3,10,11,6,18\in \mf{S}_4$ (see \Cref{fig:S-4}), by \Cref{complete-q}(1), $2$ is an eigenvalue of multiplicity at least $6$.
					\item Similarly, since $ |\mf{F}_v|-1=1$ for $v=4,9,12, 16\in \mf{S}_4$, by \Cref{complete-q}(1), $ |C(v)|-2=1$ is an eigenvalue of multiplicity at least $ 4$.
					\item Other eigenvalues are the eigenvalues of $[Q_{\com{\mf{S}_4}}/\mf{G}]$.
				\end{enumerate}
			\end{ex}
			For a linear operator $M$, the spectral radius $\rho(M)=\max\{|\lambda|:\lambda\in \sigma(M)\}$. For any group $G$, the graph $\com{G}$ is connected. Thus, $Q_{\com{G}}$ is a non-negative, irreducible matrix. By Perron-Frobenius theorem (\cite[Theorem 8.4.4]{MR2978290}), the spectral radius, $\rho(Q_{\com{G}}) \in \sigma(Q_{\com{G}}) $, the multiplicity of $\rho(Q_{\com{G}}) $ is $1$, and there exists an eigenvector $x:G\to\mathbb{R}$ of $\rho(Q_{\com{G}}) $ such that $x(v)>0$ for all $v\in G$. Thus, we have the following result.
			\begin{cor}\label{rhoq}
				For any group $G$, $\rho(Q_{\com{G}})=\rho([Q_{\com{G}}/\mf{G}]) $. 
			\end{cor}
			\begin{proof}
				By \Cref{complete-q}, and the proof of \Cref{nbd-eig}, if $\lambda\in \sigma(Q_{\com{G}})\setminus\sigma([Q_{\com{G}}/\mf{G}])$, then there exists no eigenvector $x$ corresponding to $\lambda$ such that $x(v)>0$ for all $v\in G$. Thus, the result follows from the Perron-Frobenius theorem.
			\end{proof}
			For any group $G$, by \Cref{complete-q}, one can compute the complete spectra of $Q_{\com{G}}$ just by computing the spectra of a smaller matrix $[Q_{\com{G}}/\mf{G}]$. Thus, if the size of $\mf{G}$ is small then it would be easier to find the complete spectra of $Q_{\com{G}}$. In a group $G$ with bigger $\mf{G}$, even after using \Cref{complete-q}, computing the spectra of $[Q_{\com{G}}/\mf{G}]$ is difficult. Thus, it is worthwhile to find some results on the spectra of $[Q_{\com{G}}/\mf{G}]$. 
			\begin{thm}\label{non_nbd_q}
				Let $G$ be a group. If $u,v\in G$ such that
				\begin{enumerate}
					\item $uv\ne vu$,
					\item   $uw=wu$ if and only if $vw=wu$ for all $w\in G\setminus(\mf{F}_u\cup \mf{F}_v)$.
				\end{enumerate}
				If $|\mf{F}_u|=|\mf{F}_v|=c$ then $|C(u)\cap C(v)|+2(c-1)$ is an eigenvalue of $Q_{\com{G}}$.
			\end{thm}
			\begin{proof}
				To prove the result, it is enough to prove $|C(u)\cap C(v)|+2(c-1)$ is an eigenvalue of $[Q_{\com{G}}/\mf{G}]$. 
				
				Let $\mf{G}=\{\mf{F}_0=Z(G),\ldots, \mf{F}_m\}$.	To show  $$[Q_{\com{G}}/\mf{G}]x_{uv}=\left(|C(u)\cap C(v)|+2(c-1)\right)x_{uv},$$
				where $x_{uv}:\mf{G}\to\mathbb{R}$, is defined by 
				$$x_{uv}(\mf{F}_i)=
				\begin{cases}
					\phantom{-}|\mf{F}_v| &\text{~if~} \mf{F}_i=\mf{F}_u,\\
					- |\mf{F}_u| &\text{~if~} \mf{F}_i=\mf{F}_v,\\
					\phantom{-}0&\text{~otherwise,}
				\end{cases}$$
				we consider the following cases.
				\begin{enumerate}
					\item If $\mf{F}_i\notin \{\mf{F}_u,\mf{F}_v\}$, and $\mf{F}_i$ is adjacent to both $\mf{F}_u,\mf{F}_v$ then 
					\begin{align*}
						([Q_{\com{G}}/\mf{G}]x_{uv})(\mf{F}_i) =|\mf{F}_u|x_{uv}(\mf{F}_u)+|\mf{F}_v|x_{uv}(\mf{F}_v)=0.
					\end{align*}
					\item If $\mf{F}_i$ is not adjacent with $\mf{F}_u$, and $\mf{F}_v$ then either $l_{ij}=0$ or $x_{uv}(\mf{F}_j)=0$. Thus, 
					$	(  [L_\G/\mf{G}]x_{uv})(\mf{F}_i)=0.$
					\item If $\mf{F}_i=\mf{F}_u$, then $q_{ii}=	2(|\mf{F}_i|-1)+\sum\limits_{j(\ne i)=0}^mq_{ij}$. Thus,
					\begin{align*}
						(  [Q_\G/\mf{G}]x_{uv})(\mf{F}_i)&=q_{ii}x_{uv}(\mf{F}_u)=\left(|C(u)\cap C(v)|+2(c-1)\right)x_{uv}(\mf{F}_u).
					\end{align*}
					\item  Similarly, for $\mf{F}_i=\mf{F}_v $, we can show $$	(  [Q_\G/\mf{G}]x_{uv})(\mf{F}_i)=\left(|C(u)\cap C(v)|+2(c-1)\right)x_{uv}(\mf{F}_i).$$
					
				\end{enumerate}
				Thus, the result follows.
			\end{proof}
			If $u,v\in G\setminus Z(G)$ are such that $C(u)\cap C(v)=Z(G)$, then we have the following result.
			\begin{cor}
				If $G$ contains at least two non-equivalent pendent elements $u,v\in G\setminus Z(G)$, with $|\mf{F}_u|=|\mf{F}_v|=c$ then $ |Z(G)|+2(c-1)\in \sigma(Q_{\com{G}})$ .
				
			\end{cor}
			\begin{cor}
				Let $G$ be a group, and $\mf{G}=\{\mf{F}_0=Z(G),\mf{F}_1,\ldots,\mf{F}_m\}$. If $ \mf{F}_i$ is a pendant in $\G_{G/\mf{G}}$, and $|\mf{F}_i|=c$ for all $i=1,\ldots,m$, then $|Z(G)|+2(c-1)$ is an eigenvalue of $Q_{\com{G}}$ with multiplicity $m-1$.
			\end{cor}
			\begin{ex} Using \Cref{non_nbd_q}, we have the following eigenvalues of $\mf{S}_4$.
				\begin{enumerate}[leftmargin=*]
					\item For $(u,v)=(7,18),(3,11),(6,10)$, we have $|C(u)\cap C(v)|+2(c-1)=4$ is an eigenvalue with multiplicity $3$.
					\item For $(u,v)=(16,12),(16,4),(16,9)$, we have $|Z(G)|+2(c-1)=3$ is an eigenvalue of multiplicity at least $3$.
				\end{enumerate}
			\end{ex}
			\subsection{Adjacency spectra of the commuting graph of a group}
			Consider the matrix $[A_{\com{G}}/\mf{G}]=(a_{ij})_{\mf{F}_i,\mf{F}_j\in\mf{G }}$ associated with $\G_{G/\mf{G}}$, defined by
			$$a_{ij}=\begin{cases}
				|\mf{F}_j|&\text{~if~}i\ne j \text{~and~} \mf{F}_i \text{ is adjacent to }\mf{F}_j,\\
				0&\text{~if~}i\ne j \text{~and~} \mf{F}_i \text{ is not adjacent to }\mf{F}_j,\\
				|\mf{F}_i|-1 &\text{~if~}i=j.
			\end{cases} $$
			Now we show that $\sigma([A_{\com{G}}/\mf{G}])\subseteq \sigma(A_{\com{G}})$.
			\begin{thm}\label{a-contraction}
				Let $G$ be a group. If $\lambda$ is an eigenvalue of the matrix $[A_{\com{G}}/\mf{G}]$ with eigenvector $x:\mf{G}\to\mathbb{R}$, then 
				$\lambda$ is also an eigenvalue of $A_{\com{G}}$ with eigenvector $\ov{x}:{G}\to\mathbb{R}$, defined by $\ov{x}(v)=x(\mf{F}_v)$, for all $v\in V(\G)$.
			\end{thm}
			\begin{proof}
				Let $v\in \mf{F}_i$, for some $i=1,\ldots,m$,
				\begin{align*}
					(A_{\com{G}}\ov{x})(v)&=\sum\limits_{u(\ne v)\in C(v)}\ov{x}(u)\\
					&=\sum\limits_{u(\ne v)\in \mf{F}_i}\ov{x}(u)+\sum\limits_{j(\ne i):j\sim i}\sum\limits_{u\in \mf{F}_j}\ov{x}(u)\\
					&=(|\mf{F}_i|-1)x(\mf{F}_i)+\sum\limits_{j(\ne i):j\sim i}|\mf{F}_j|x(\mf{F}_j)\\
					&=([A_{\com{G}}/\mf{G}]x)(\mf{F}_i).
				\end{align*}
				Thus, the result follows.
			\end{proof}
			The following result provides the complete spectra of $A_{\com{G}}$.
			\begin{thm}[Complete spectrum of $A_{\com{G}}$] \label{complete-a}
				Let $G$ be a group. The complete spectrum of $A_{\com{G}}$ is listed below.
				\begin{enumerate}
					\item For all $v\in G$, $-1$ is an eigenvalue of multiplicity $|\mf{F}_v|-1$.
					\item The remaining eigenvalues are the eigenvalues of $[A_{\com{G}}/\mf{G}] $, that is
					$\sigma([A_{\com{G}}/\mf{G}])\subseteq \sigma(A_{\com{G}})$. If $x\in \Omega_{\lambda}([A_{\com{G}}/\mf{G}])$ for any $\lambda\in \sigma([A_{\com{G}}/\mf{G}])$, then the blow up $\ov{x}\in \Omega_{\lambda}(A_{\com{G}})$.
				\end{enumerate}
			\end{thm}
			\begin{proof}
				The result follows from \Cref{nbd-eig} and \Cref{a-contraction}. We just need to prove that the list given in this result is complete and that part is similar to the proof of \Cref{complete-q}.
			\end{proof}
			\begin{ex} Now, we use \Cref{complete-a} to provide the following eigenvalues of $\mf{S}_4$.
				\begin{enumerate}[leftmargin=*]
					\item For $v=7,3,10,11,6,18\in \mf{S}_4$, we have $|\mf{F}_v|=2$ (see \Cref{fig:S-4}).
					\item  Similarly, $ |\mf{F}_v|-1=1$ for $v=4,9,12, 16\in \mf{S}_4$.
				\end{enumerate}
				Thus, $-1$ is an eigenvalue of multiplicity at least $10$. Other eigenvalues are the eigenvalues of $[A_{\com{\mf{S}_4}}/\mf{G}]$.
			\end{ex}
			The proof of the following result is similar to that of \Cref{rhoq}. Thus, we state the following result without proof.
			\begin{cor}\label{rhoa}
				For any group $G$, $\rho(A_{\com{G}})=\rho([A_{\com{G}}/\mf{G}]) $. 
			\end{cor}
			Now we prove a result similar to \Cref{non_nbd} and \Cref{non_nbd_q}.
			\begin{thm}\label{non_nbd_a}
				Let $G$ be a group. If $u,v\in G$ such that
				\begin{enumerate}
					\item $uv\ne vu$,
					\item   $uw=wu$ if and only if $vw=wu$ for all $w\in G\setminus(\mf{F}_u\cup \mf{F}_v)$.
				\end{enumerate}
				If $|\mf{F}_u|=|\mf{F}_v|=c$ then $(c-1)$ is an eigenvalue of $A_{\com{G}}$.
			\end{thm}
			\begin{proof}
				Since $A_{\com{G}}=\frac{1}{2}(Q_{\com{G}}-L_{\com{G}})$, by \Cref{non_nbd}, and \Cref{non_nbd_q}, the result follows.
			\end{proof}
			\begin{ex} Using \Cref{non_nbd_a}, we have the following eigenvalues of $\mf{S}_4$.
				\begin{enumerate}[leftmargin=*]
					\item For $(u,v)=(7,18),(3,11),(6,10)$, we have $(c-1)=1$ is an eigenvalue with multiplicity $3$.
					\item For $(u,v)=(16,12),(16,4),(16,9)$, we have $(c-1)=1$ is an eigenvalue of multiplicity at least $3$.
				\end{enumerate}
			\end{ex}
			\section{Group information encoded in spectra of its commuting graph}\label{Groupinfo}
			One key objective of the study of graphs on a group is exploring group properties via these graphs. The commuting graph of a group is not an exception. In this section, we study the group information encoded in the spectra of the commuting graph.
			\subsection{Finding the center of a group} Recall that for $\lambda\in\sigma(L_{\com{G}})$, we denote the eigenspace of $\lambda$ as $ \Omega_{\lambda}(L_{\com{G}})$. For a family of set $\mathcal{S}$, we refer to $S_{\max}\in \mathcal{S}$ as the \emph{maximum element} of $\mathcal{S}$, if $S\subseteq S_{\max}$ for all $S\in \mathcal{S}$.
			\begin{thm}[Spectral method for computation of center of a group]\label{center_LG}
				Let $G$ be a group, and $v_0(\in G)$ be the identity element of $G$. Either $Z(G)=\{v_0\}$ or 
				$Z(G)$ is the maximum element of the  family of sets
				$$\mathcal{S}=\{W\subseteq G:v_0\in W\text{~and~}T_W\text{~is a subspace of~} \Omega_{|G|}(L_{\com{G}})\}.$$
			\end{thm}
			\begin{proof}
				Suppose that $Z(G)\ne\{v_0\}$. Thus, there exists $v(\ne v_0)\in Z(G)$. By \Cref{nbd-eig}, $\{v_0,v\}\in \mathcal{S}$ for all $ v(\ne v_0)\in Z(G)$, and thus, $\mathcal{S}\ne \emptyset$.
				Our claim is if $W\in \mathcal{S} $ then $W\subseteq Z(G)$. Let $W=\{w_0,w_1,\ldots,w_k\}$. We define $y_k:G\to\mathbb{R}$ as $y_i=\chi_{\{v_i\}}-\chi_{\{v_0\}}$ for $i=1,\ldots,k$. Thus, $y_i\in T_W$ for all $i=1,\ldots,k$. Since $W\in \mathcal{S}$, we have $ y_i\in \Omega_{|G|}(L_{\com{G}})$ for all $i=1,\ldots,k$. Thus, $(L_{\com{G}}y_i)=|G|y_i$, and this leads us to 
				\begin{equation}\label{wieqn}
					\sum\limits_{v\in C(w_i)}(y_i(w_i)-y_i(v))=|G|.
				\end{equation}
				Since $(y_i(w_i)-y_i(v))=\begin{cases}
					1&\text{~if~}v\notin\{w_i,w_0\},\\
					0&\text{~if~}v=w_i,\\
					2&\text{~if~}v=w_0,
				\end{cases} $ the following observations show that $ C(w_i)=|G|$, and thus, establish our claim.
				\begin{enumerate}
					\item Since each element of a group belongs to its centralizer, $w_i\in C(w_i)$.
					\item We claim $w_0\in W$. If not, then $\sum\limits_{v\in C(w_i)}(y_i(w_i)-y_i(v))\le |G|-2$, a contradiction to \Cref{wieqn}. Thus, $ w_0\in C(w_k)$. Therefore, $\{w_0,w_k\}\subseteq C(w_k)$.
					\item Now we claim $v\in C(w_k)$ for all $v\in G\setminus \{w_0,w_k\}$. If not, let us assume that there exists $v_i\in G\setminus \{w_0,w_k\}$ such that $v_i\notin C(w_k)$. In that case,
					$\sum\limits_{v\in C(w_i)}(y_i(w_i)-y_i(v))\le |G|-1 $, a contradiction to \Cref{wieqn}. Thus, our assumption is wrong and $v\in C(w_k)$ for all $v\in G\setminus \{w_0,w_k\}$. 
				\end{enumerate}
				Therefore, by (1), (2), and (3), we have $C(w_i)=G$. Similarly, we can show $C(w)=G$ for all $w\in W$, and $W\subseteq Z(G)$. Therefore, $W\subseteq Z(G)$ for all $W\in \mathcal{S} $. Since by \Cref{nbd-eig}, $Z(G)\in\mathcal{S}$. Therefore, $Z(G)$ is the maximum element of $\mathcal{S}$.
			\end{proof}
			For any group $G$, \Cref{center_LG} leads us to the following Algorithm, which can find $Z(G)$ using $L_{\com{G}}$.

			\SetKwComment{Comment}{/* }{ */}
			\begin{algorithm}
				\caption{For a group $G$ with identity element $v_0$, $|G|=n$, 
					this algorithm returns $Z(G)$.}\label{alg:Z(G)}
				\KwData{$n > 0$, $v_0:$ the identity element of $G$ 
				}
				\KwResult{$S=Z(G)$}
				$S \gets \{v_0\}$\;
				\For{$v(\ne v_0)\in G$}{
					$y\gets \chi_{\{v\}}-\chi_{\{v_0\}}$ \Comment*[r]{This for loop computes $Z(G)$}
					\If{$L_{\com{G}}y=n y$ }{
						$S \gets S\cup\{v\}$\;
					}
					}
				\end{algorithm}
				A group is defined on a set, and we refer to the set as the \emph{underlying set} of the group.
				For two group $G_1,G_2$ with the same underlying set, if $\Omega_{|G_1|}(L_{\com{G_1}})=\Omega_{|G_2|}(L_{\com{G_2}})$ then \Cref{center_LG} leads us to the following result.
				\begin{cor}
					Let $G_1$ and $G_2$ be two groups with the same underlying set. If $\Omega_{|G_1|}(L_{\com{G_1}})=\Omega_{|G_2|}(L_{\com{G_2}})$ then $Z(G_1)=Z(G_2)$.
				\end{cor}
				\begin{proof}
					If $\Omega_{|G_1|}(L_{\com{G_1}})=\Omega_{|G_2|}(L_{\com{G_2}})$  then the set $\mathcal{S}$ would be the same for both the groups and thus, by \Cref{center_LG}, $Z(G_1)=Z(G_2)$.
				\end{proof}
				\begin{ex}\label{s4}
					
					\begin{enumerate}[leftmargin=*]
						Now we use \Cref{center_LG} to find center of some groups.
						\item   Let us consider $\com{\mf{S}_4} $ (\Cref{fig:S-4}). 
						Since for all $v(\ne 1)\in \mf{S}_4$, $L_{\com{\mf{S}_4}}(\chi_{\{v\}}-\chi_{\{1\}})\ne |G|(\chi_{\{v\}}-\chi_{\{1\}})$, by Algorithm (\ref{alg:Z(G)}), we have $Z(\mf{S}_4)=\{1\}$.
						\item For $\G=\com{Q_8}=\com{D_8}$. Since other than the identity $v_0$, there exists only one $v=x^2$ in $D_8$ $(v=a$ in $Q_8)$ such that $L_{\G}(\chi_{\{v\}}-\chi_{\{v_0\}})= |G|(\chi_{\{v\}}-\chi_{\{v_0\}})$. Thus, by Algorithm (\ref{alg:Z(G)}), we have $Z(G)=\{v_0,v\}$. 
					\end{enumerate}
					
				\end{ex}
				
				\subsection{How far a non-commuting group is from being commutative}
				If a group $G$ is commutative, then $\com{G}$ is a complete graph. Thus, the spectra of $L_{\com{G}}$ contains only two distinct real ($0$, and $|G|$). This is the only case where the number of distinct eigenvalues of  $L_{\com{G}}$ is $2$. If $G$ is not commutating, then the number of distinct eigenvalues of  $L_{\com{G}}$ is always greater than $2$. Thus, the number of distinct eigenvalues of  $L_{\com{G}}$ measures how far $G$ is from being commutative. The following result provides a class of groups such that the Laplacian spectrum of the commuting graph has $4$ distinct elements.
				\begin{prop}\label{zg-prop}
					Let $G$ be a group.
					If for all $u,v\in G\setminus Z(G)$,
					\begin{itemize}
						\item[(i)]$|C(u)|=|C(v)|$, and
						\item[(ii)] $\text{~either~} C(u)=C(v) \text{~or~} C(u)\cap C(v)=Z(G),$
					\end{itemize}
					then the spectrum of $L_{\com{G}}$ has $4$ distinct elements.
				\end{prop}
				\begin{proof}
					Sine $|C(u)|=|C(v)|=m$(say), for all $u,v\in G\setminus Z(G)$, by \Cref{all-pendant}, we have the $4$ distinct eigenvalues are $|G|,m,|Z(G)|,0$.
				\end{proof}

				{If along with the condition (i) and (ii) of \Cref{zg-prop}, $|\mf{F}_u|=1$ for all $u\in G\setminus Z(G)$ then by \Cref{all-pendant}, $L_{\com{G}}$ has only three distinct eigenvalues. However, we are unable to find a suitable group in support of it. Thus, we land into the question whether such a group $G$ exists that $L_{\com{G}}$ has only three distinct eigenvalues.}
				
				For a graph $\G$, the least non-zero eigenvalue of $L_{\G}$ is called the \emph{algebraic connectivity} of $\G$ \cite{fiedler1973algebraic}. The algebraic connectivity measures how well connected $\G$ is. For a complete graph on $n$ vertices, the algebraic connectivity is $n$, which is the maximum, and it is $0$ for a disconnected graph. Thus, for a group $G$, the algebraic connectivity of $\com{G}$ is another measure of how far $G$ is from being a commutative group. If $G$ is commutative, then the algebraic connectivity is $|G|$, which is the maximum. For a group $G$ that satisfy \Cref{con}, the algebraic connectivity is $|Z(G)|$ (see \Cref{all-pendant}).
				Thus, we have the following result.
				\begin{prop}\label{iso}
					For a group $G$ that satisfy \Cref{con}, the algebraic connectivity of the graph $\com{G}$ is given by the cardinality
					of the center of the group $G$.
				\end{prop}
				\begin{proof}
					The result follows from the complete list of eigenvalues given in \Cref{all-pendant}.
				\end{proof}
				Evidently, by \Cref{iso}, the isoperimetric number of $\mathcal C_{\mathfrak S_3}$ is $|Z(\mathfrak S_3)|=1.$
				\section{Graph Invariants for Commuting Graphs}\label{app}
				Any graph property is said to be a graph invariant if it remains unchanged under graph isomorphisms.
				The goal of this section is to express some graph invariants of the commuting graph associated with a finite group in terms of its Laplacian spectrum.
				\subsection{Minimum degree, Independent number and Clique Number}
				An independent set of a graph $\Gamma(V,E)$ is a subset $S \subseteq V$ such that if $v_1,v_2\in S$ then $v_1$ and $v_2$ are not adjacent. The cardinality of the largest independent set in $V$ is called the independent number $\alpha(\Gamma)$ of the graph $\Gamma$.
				
				In the commuting graph $\mathcal{C}_G$, the degree of a vertex $v\in G $ is $d(v)=|C(v)|-1$. So the maximum degree is always given by $|G|-1.$ Let $c = \displaystyle\text{min}_{u \in G}|C(u)|.$ Then the minimum degree in $\mathcal C_G$ is $c-1.$ From \cite[Theorem 2.6, Theorem 3.2]{MR2344135}, one has $$1\le \alpha(\m C_G) \le |G| - c+1.$$ 
				For an abelian group $G,$ we have $c =|G|,$ so it follows that $\alpha(\m C_G) =1.$
				Suppose that the group $G$ satisfies Equation \eqref{con}.  By  \Cref{all-pendant}, $d(v) +1$ is an eigenvalue with multiplicity $d(v) - |Z(G)|.$ If $v$ commutes with at least one element $u (\neq v)$ which is not in $Z(G),$ then $d(v) \neq |Z(G)|.$ This implies that $d(v)+1$ is an eigenvalue of the Laplacian matrix of $\mathcal{C}_G$ with multiplicity $d(v) - |Z(G)| \geq 1$. Consider the subset 
				\begin{equation}
					\label{cu1} C(G)=\{v\in G: d(v) - |Z(G)| >0\}.
				\end{equation}   
				If $C(G)=G,$ then by \Cref{all-pendant}, $c=\l_3$ and  $1\le \alpha(\m C_G)\le |G| - \l_3+1$, where $\l_3$ is the third least eigenvalue of the Laplacian matrix of the graph $\mathcal C_G.$ Therefore, we have the following result.
				
				\begin{prop}\label{new_prop1}
					Suppose that $G$ is a group that satisfies Equation \eqref{con}. If $ C(G)=\{v\in G: d(v) - |Z(G)| >0\}=G$, then the independent number $\alpha(\m C_G)$ lies in the following interval: $$ 1\le \alpha(\m C_G)\le |G| - \l_3+1,$$ where $\l_3$ is the third least eigenvalue of the Laplacian matrix of the graph $\mathcal C_G.$
				\end{prop}
				A clique $S$ in a graph $\Gamma (V,E)$ is a subset $S \subseteq V$ such that any two distinct vertices $v_1, v_2 \in S$ are adjacent to each other. The cardinality of the maximum clique is called the clique number, $\omega (\Gamma)$. 
				
				Clearly, $\omega(\mathcal{C}_G)=|G|$, whenever $G$ is an abelian group. Suppose that $G$ is a non-abelian group satisfying Equation \ref{con}. For every $u\in G\setminus Z(G)$, the centralizer $C(u)$ is a clique in $\mathcal{C}_G$. From the construction of $\mathcal C_G,$ we conclude that $\omega(\mathcal{C}_G)=\max\limits_{u\in G\setminus Z(G)}|C(u)| $. By Theorem \Cref{all-pendant} (3), the following result holds.
				\begin{prop}
					If $G$ is a non-abelian group satisfying Equation \eqref{con}, then the clique number $\omega(\mathcal{C}_G)$ is given by 
					\begin{enumerate}
						\item[(i)] the second largest eigenvalue of the Laplacian matrix of the commuting graph $\mathcal{C}_G$, if $C(G)\supsetneq Z(G)$ and
						\item[(ii)] $|Z(G)|+1,$ if $Z(G) = C(G),$
					\end{enumerate}
					where $C(G)$ is as defined in Equation \eqref{cu1}.
				\end{prop}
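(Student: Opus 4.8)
The plan is to reduce the clique-number claim to the eigenvalue description already obtained in Theorem \ref{main}, together with the elementary observation made just before the proposition that $\omega(\mathcal C_G)=\max_{u\in G\setminus Z(G)}|C(u)|$ when $G$ is non-abelian and satisfies Equation \eqref{con}. So the real content is to identify this maximum centralizer size with the appropriate eigenvalue of $L$. First I would recall from Theorem \ref{main}(3) that for each $i\in\{0,\ldots,r-1\}$ the value $\lambda_i=|C(u)|$ (for any $u\in\mathcal F_i$) is an eigenvalue of $L$, and from Theorem \ref{main}(4) that $|G|$ is the largest eigenvalue. Since $|C(u)|<|G|$ for every $u\in G\setminus Z(G)$ (as $u\notin Z(G)$ means $C(u)\subsetneq G$), the quantity $\max_i\lambda_i=\max_{u\in G\setminus Z(G)}|C(u)|$ is at most $|G|$ and is a candidate for the \emph{second} largest eigenvalue.

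The step that requires care is the case analysis governed by $C(G)$, defined in Equation \eqref{cu}. In case (i), $C(G)\supsetneq Z(G)$, so there exists $v\in G\setminus Z(G)$ with $d(v)-|Z(G)|>0$, i.e. $|C(v)|>|Z(G)|+1$; equivalently the component $\mathcal F_i$ containing $v$ has $|\mathcal F_i|=|C(v)|-|Z(G)|\ge 2$, so $\lambda_i$ genuinely appears in the spectrum with positive multiplicity $|\mathcal F_i|-1\ge 1$ as an eigenvalue strictly below $|G|$. I would then argue that among all eigenvalues of $L$, the ones strictly less than $|G|$ are exactly $0$, $|Z(G)|$, and the $\lambda_i$'s, so the largest among these is $\max_i\lambda_i$; since $\max_i\lambda_i\ge\lambda_i>|Z(G)|$ and every $\lambda_i\le\max_i\lambda_i$, this maximum is precisely the second largest eigenvalue of $L$. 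Combining with $\omega(\mathcal C_G)=\max_{u\in G\setminus Z(G)}|C(u)|=\max_i\lambda_i$ finishes case (i). In case (ii), $Z(G)=C(G)$ means $d(v)-|Z(G)|\le 0$ for every $v\in G\setminus Z(G)$, forcing $|C(v)|\le|Z(G)|+1$; but $\{v\}\cup Z(G)\subseteq C(v)$ gives $|C(v)|\ge|Z(G)|+1$, hence $|C(v)|=|Z(G)|+1$ for all $v\in G\setminus Z(G)$, so $\max_{u\in G\setminus Z(G)}|C(u)|=|Z(G)|+1$ and thus $\omega(\mathcal C_G)=|Z(G)|+1$.

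The main obstacle I anticipate is purely bookkeeping: making sure that in case (i) the number $\max_i\lambda_i$ really is the \emph{second} largest eigenvalue and not tied with $|G|$ or hidden among multiplicities. This is handled by the strict inequality $|C(u)|<|G|$ for $u\notin Z(G)$ together with Theorem \ref{main}(4), which pins $|G|$ as the unique largest eigenvalue value; one must also note that every $\lambda_i$ with $|\mathcal F_i|=1$ still equals some $|C(u)|=|Z(G)|+1$ and so does not exceed $\max_i\lambda_i$ anyway, so no eigenvalue is overlooked. I would also remark that when all components are singletons we are automatically in case (ii), which keeps the two cases exhaustive. Beyond this the proof is a direct citation of Theorem \ref{main} and the definition of $C(G)$, so no new technique is needed.
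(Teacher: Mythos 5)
Your proof is correct and takes essentially the same route as the paper: reduce the clique number to $\max_{u\in G\setminus Z(G)}|C(u)|$ (using that each centralizer of a non-central element is abelian, hence a clique, and that any larger clique must sit inside such a centralizer) and then match this maximum with the spectrum given by Theorem \ref{main}. The paper leaves the case analysis essentially as a one-line citation of Theorem \ref{main}(3); the bookkeeping you supply --- that singleton components contribute only the value $|Z(G)|+1$ and hence never hide the true maximum, and that $|C(u)|<|G|$ pins the maximum strictly below the top eigenvalue --- is exactly the verification needed and is carried out correctly.
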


				\subsection{Mean distance and Graph diameter}
				The distance between two vertices of a graph is the number of edges in a shortest path connecting them. The $ij$-th element of the graph distance matrix is given by the distance of the vertices $v_i$ and $v_j.$ The diameter of the graph is the maximum element of the graph distance matrix. Moreover, the mean distance is the average of all elements of the graph distance matrix \cite{MR485476}.
				
				Let enumerate the elements of the group $G$ by $\{v_1,\ldots,v_{|G|}\}.$ Suppose that the length of the shortest path between any two vertices $v_i$ and $v_j$ of $\m C_G$ is denoted by $\gamma_{ij}.$ The graph distance matrix is the square matrix $\big(\!\!\!\big(\gamma_{ij}\big)\!\!\!\big)_{i,j=1}^{|G|}$. For any abelian group $G$,  $$\gamma_{ij} = \begin{cases}
					1, &\text{~for~} i \neq j,\\
					0, &\text{~otherwise.}
				\end{cases}$$ The mean distance and diameter of the graph $\m C_G$ are given by $\frac{|G| -1}{|G|}$ and $1$, respectively.
				Suppose that $G$ is a finite non-abelian group which satisfies Equation \eqref{con}. By the construction of $\m C_G,$ it is clear that the following hold: 
				$$\gamma_{ij} = \begin{cases}
					0 &\text{if } v_i = v_j,\\
					1 &\text{if } v_i \text{ or } v_j \in Z(G),\\
					1 &\text{if } v_i,v_j \notin Z(G), v_i,v_j \in C(u) \text{ for } u \in G,\\
					2 &\text{if } \nexists u \in G \text{ such that } v_i, v_j \in C(u).
				\end{cases}$$
				So the mean distance of $\m C_G =\frac{1}{|G|^2} \sum\limits_{i,j=1}^{|G|} \gamma_{ij}$ and the diameter of $\m C_G = \rm{\max\limits_{i,j}} \gamma_{ij} = 2.$ Now we have \Bea \sum_{i,j=1}^{|G|} \gamma_{ij} = \sum_{v \in G} (|C(v)| + 2|G\setminus C(v)| -1) &=& 2|G|^2 -|G| -\sum_{v \in G} |C(v)|\\
				&=& 2|G|^2 -|G| - |Z(G)||G| - \sum_{v \in G\setminus Z(G)} |C(v)|\\
				&=& 2|G|^2 -2|G| -\sum_{i} n_i\l_i,\Eea
				where each $\l_i$ is the eigenvalue of the Laplacian $L$ with multiplicity $n_i.$ Moreover, $\sum_{i} n_i\l_i = \tr(L) = \sum_{v \in G}d(v),$ where $d(v)$ denotes the degree of $v \in G$ in the graph $\m C_G.$ Therefore, the mean distance of $ C_G$ is $\frac{2|G|^2 -2|G| - \sum_{v \in G}d(v)}{|G|^2}$ and one has the following result.
				
				\begin{prop}
					If $G$ is a finite non-abelian group which satisfies Equation \eqref{con} then the mean distance of $ C_G$ is 
					$$\frac{2|G|^2 -2|G| - \sum_{v \in G}d(v)}{|G|^2},$$
					where $d(v)$ denotes the degree of $v \in G.$ \end{prop}
				\section{Acknowledgement} The first named author acknowledges the financial support from  IoE-IISc. The work of the second named author is supported by the University Grants Commission, India. 
				\section*{Conflict of interest}
				We want to declare that there are no known conflicts of interest with this work. 

			\end{document}